\DeclareMathAlphabet{\mathpzc}{OT1}{pzc}{m}{it}
\pgfplotsset{compat=1.15}
\theoremstyle{plain}
\numberwithin{equation}{section}
\newtheorem{thm}{Theorem}[section]
\newtheorem{cor}[thm]{Corollary}
\theoremstyle{definition}
\newtheorem{lemma}[thm]{Lemma}
\newtheorem{rmk}[thm]{Remark}
\newtheorem{defin}[thm]{Definition}
\newcommand\lip{\textup{lip}}
\newcommand\hol{\textup{hol}}
\newcommand\loc{\textup{loc}}
\newcommand{\suchthat}{\;\ifnum\currentgrouptype=16 \middle\fi|\;}
\DeclareMathAlphabet\mathbfcal{OMS}{cmsy}{b}{n}
\tikzset{
  subseteq/.style={
    draw=none,
    edge node={node [sloped, allow upside down, auto=false]{$\subseteq$}}}}
 \renewcommand\qedsymbol{$\blacksquare$}   
\date{}
\begin{document}

\title{Topologies for geometric flows and continuous dependence on parameters}
\author{Andrew D. Lewis \footnote{Professor, Department of Mathematics and Statistics, Queen’s University, Kingston, ON K7L 3N6, Canada.} \and Yanlei Zhang\footnote{Postdoc, Mila - Quebec AI Institute,
Montreal, Quebec H2S 3H1, Canada \newline \hspace*{1.8em}Email:\href{mailto:yanlei.zhang@queensu.ca}{yanlei.zhang@mila.quebec}}}
\maketitle

\begin{abstract}
We study time- and parameter-dependent ordinary differential equations in the geometric setting of vector fields and their flows. Various degrees of regularities in state are considered, including Lipschitz, finitely diferentiable, smooth, and holomorphic. A suitable topology for the space of flows is derived using geometric descriptions of suitable topologies for vector fields. A new kind of continuous dependence is proved, that of the fixed time local flow on the parameter in a general topological space.
\vspace{5pt}

\noindent\textbf{Keywords:} parameter-dependent flows; time-varying vector fields; initial topology

\vspace{5pt}
\noindent\textbf{AMS Subject Classifications (2020):} 34A12, 34A26, 34A34, 46E10, 53B05, 53C23
\end{abstract}

\section{Introduction}
This paper is concerned with a classical subject: time- and parameter-dependent ordinary differential equations, albeit in the geometric setting of vector fields and with a degree of generality one does not find with classical results. The objectives of the paper are to:
\begin{enumerate}
    \item provide a suitable topology for the space of parameter-dependent flows with a broad range of regularity in state;
    \item prove a very general continuity result for the ``parameter to local flow'' mapping.
\end{enumerate}
Before we give an overview of our approach and results, we shall review the commonly accepted state of play, as this is indeed well trod ground.

To allow for a discussion with some context, for the moment we consider an initial value problem on a smooth manifold $M$,
$$\xi'(t)=F(t,\xi(t),p),\ \ \xi(t_0)=x_0,$$
with a solution $t\to \xi(t)\in M$ and $p$ a parameter in a topological space $\mathcal{P}$. We write the solution of the initial value problem as  
$$(t,t_0,x_0,p)\to \Phi^F(t,t_0,x_0,p),\ \ p\in\mathcal{P},$$
to include all of its dependencies. This is typically thought of as the image of the family of time-dependent vector fields $(t,x)\to F_p(t,x):=F(t,x,p)$ under the map
\begin{eqnarray}\label{exp}
    \text{exp}:\{\text{vector fields}\}&\rightarrow& \{\text{local flows}\}\\
	  F&\mapsto& \Phi^F. \nonumber
\end{eqnarray}
This map plays a key role in understanding the basic structural attributes of control systems in control theory literature (\cite{Agrachev(2004),Jurdjevic(1997), BulloandLewis(2005)}).

The main contribution of this paper is concerned with two attributes. 
\begin{enumerate}
    \item We consider the regularities of vector fields and flows. The discussion of this matter can be broken down into three attributes of vector fields: time, state and parameter. 
    \begin{enumerate}
        \item For time-independent vector fields, the basic regularity results are well-established for all the standard degrees of regularity (\cite{Coddington(1955)}), including real analyticity (\cite{Sontag(1998)}). In the time-varying case, the standard Carath\'eodory existence and uniqueness theorem for ordinary differential equations depending measurably on time is a part of classical treatments (\cite{Coddington(1955)}). Continuity with respect to initial condition is proved by \cite{Sontag(1998)}.
        \item  Regarding state, in the situation where $x\to F(t,x)$ has some regularity, the matter of when this regularity is shared by $x \to \Phi^F(t, t_0, x)$ is dealt with more sparsely in the ordinary differential equation literature. For example, this arises in the chronological calculus approach of \cite{Agracev1979}, an approach which is given a nice outline in \cite{Agrachev(2004)}. The standard assumption for dependence on state that allows for the proof of the existence and uniqueness theorem is Lipschitz regularity. If the vector fields depends on state in a more regular way, one has standard theorems giving correspondingly more regular dependence of the flow on initial condition \cite{Hartman(1964)}. 
        \item Dependence on parameters in the general setting of measurable time-dependence is required in applications to optimal control. For example, \cite{Sontag(1998)} proves a quite general theorem for continuous dependence of the final state on initial state $x_0$ in the presence of measurable time-dependence. The case of differentiable dependence on parameters is considered in \cite{Hestenes(1966)} and \cite{Klose(2011)} by requring the parameter space to be metrisable, since a crucial role is played by the Dominated Convergence Theorem, which necessitates that the topology of the parameter space be describable by sequences. \cite{Schuricht2000} carefully consider differentiable dependence on state with continuous dependence on parameters. These authors also point out the paucity of results in these directions concerning differential equations with measurable time dependence. We do not consider here the problem of differentiable dependence on parameters, as our immediate concerns are for making the parameter spaces as general as possible, e.g., not necessarily being able to support a theory of differentiation. 
    \end{enumerate}
    \item We consider the suitable topologies for the space of vector fields and their flows.  This general idea is formulated by \cite{Jafarpourandlewis(2014)} using seminorms defined intrinsically in terms of fibre norms on jet bundles to include a variety of regularity classes, e.g., Lipschitz, finitely differentiable, smooth, holomorphic, and real analytic. We will follow the same treatment as \cite{Jafarpourandlewis(2014)} for the topologies of vector fields with different regularities. In Section \ref{section2}, we will give an explicit description for the topologies on spaces of time- and parameter-dependent vector fields, and derive a new topology for the space of flows. As a consequence, a general continuous dependence result is proved in Section \ref{section4}, that of the fixed time local flow on a parameter in a general topological space. 
\end{enumerate}

\section{Topologies}\label{section2} 
We shall give a brief outline of the notation we use in the paper. We shall mainly give definitions, establish the bare minimum of facts we require, and refer the reader to the references for details.

We shall assume all manifolds to be Hausdorff, second countable, and connected. We shall work with manifolds and vector bundles coming from different categories: smooth (i.e., infinitely differentiable), real analytic, and holomorphic (i.e., complex analytic). We shall use ``class $C^r$" to denote these three cases, i.e., $r\in\{\infty,\omega,\text{hol}\}$ for smooth, real analytic, and holomorphic, respectively.

Let $m\in\mathbb{Z}_{\geq 0}$ and let $m'\in \{0,\text{lip}\}$. We shall work with objects with regularity $\nu\in \{m+m',\infty,\omega,\text{hol}\}$. Thus $\nu=m$ means ``$m$-times continuously differentiable," $\nu=m+\text{lip}$ means ``$m$-times continuously differentiable with locally Lipschitz top derivative," $\nu=\infty$ means ``smooth", $\nu=\omega$ means ``real analytic", and $\nu=\text{hol}$ means ``holomorphic." Given $\nu\in \{m+m',\infty,\omega,\text{hol}\}$, we shall say ``let $r\in\{\infty,\omega,\text{hol}\}$, as required." This has the obvious meaning that $r=\text{hol}$ when $\nu=\text{hol}$, $r=\omega$ when $\nu=\omega$, and $r=\infty$ otherwise. We shall also use the terminology ``let $\mathbb{F}\in\{\mathbb{R},\mathbb{C}\}$, as appropriate." This means that $\mathbb{F}=\mathbb{C}$ when $r=\text{hol}$ and $\mathbb{F}=\mathbb{R}$ otherwise. 

Let $M, N$ be $C^r$-manifolds and let $\pi: E\to M$ be a $C^r$-vector bundle. We shall denote by $\Gamma^\nu(E)$, $C^\nu(M)$ and $C^\nu(M;N)$ the spaces of sections, functions and mappings of regularity $\nu$, respectively.

\subsection{Topologies for vector fields}
We will provide explicit seminorms that define the various topologies we use for local sections, corresponding to regularity classes $\nu\in \{m+m',\infty,\omega,\text{hol}\}$. We shall not use much space to describe the nature of these topologies, but give a general sketch and refer the interested readers to  \cite{Jafarpourandlewis(2014)} for more details.

As we are interested in ordinary differential equations with well-defined flows, we must, according to the usual theory, consider locally Lipschitz sections of vector bundles. In particular, we will find it essential to topologise the space of locally Lipschitz sections of $\pi : E \to M$. To define the seminorms for this topology, we make use of a ``local least Lipschitz constant."

We let $\xi : M \to E$ be such that $\xi(x) \in E_x$ for every $x \in M$. For a piecewise differentiable curve $\gamma : [0, T] \to M$, we denote by $\tau_{\gamma,t} : E_{\gamma(0)} \to E_{\gamma(t)}$ the isomorphism of parallel translation along $\gamma$ for each $t \in [0, T]$. We then define, for $K \subseteq M$ compact,
\begin{multline*}
    l_K(\xi)=\sup\bigg\{\frac{||\tau^{-1}_{\gamma,1}(\xi\circ\gamma(1))-\xi\circ\gamma(0)||_{\mathbb{G}_\pi}}{\ell_{\mathbb{G}_M}(\gamma)}\bigg|\gamma:[0,1]\to M,\  \gamma(0), \gamma(1) \in K,\\
    \gamma(0) \neq \gamma(1)\bigg\},
\end{multline*}
which is the \textbf{K-sectional dilatation of} $\xi$. Here $\ell_{\mathbb{G}_M}$ is the length function on piecewise differentiable curves. We also define
\begin{eqnarray*}
  \text{dil}\ \xi: M &\to& \mathbb{R}_{\geq 0}\\
  x&\mapsto&\inf\{l_{\text{cl}}(\mathcal{U})(\xi) \ |\  \mathcal{U} \text{ is a relatively compact neighbourhood of } x\},
\end{eqnarray*}
which is the \textbf{local sectional dilatation} of $\xi$. Note that, while the values taken by $\text{dil}\ \xi$ will depend on the choice of a Riemannian metric $\mathbb{G}$, the property $\text{dil}\  \xi(x) < \infty$ for $x \in M$ is independent of $\mathbb{G}$. Moreover, $\xi\in \Gamma^{\lip}(E)$ if and only if $\text{dil}\  \xi(x) < \infty$ for all $x \in M$. We refer the reader to \cite{Jafarpourandlewis(2014)} for more details.
\subsubsection{Fibre norms for jet bundles}
Fibre norms for jet bundles of a vector bundle play an important role in our unified treatment of various classes of regularities. Our discussion begins with general constructions for the fibres of jet bundles. Let $r\in \{\infty,\omega\}$ and let $M$ be a $C^r$-manifold.  Let $\pi:E\rightarrow M$ be a $C^r$-vector bundle with $\pi_m:J^mE\rightarrow M$ its $m$th jet bundle. We shall suppose that we have a $C^r$-affine connection $\nabla^M$ on $M$ and a $C^r$-vector bundle connection $\nabla^\pi$ in $E$. By additionally supposing that we have a $C^r$-Riemannian metric $\mathbb{G}_M$ on M and a $C^r$-fibre metric $\mathbb{G}_\pi$ on $E$, we shall give a $C^r$-fibre norm on $J^mE$.  We will not require $\nabla^M$ to be the Levi-Civita connection for the Riemannian metric $\mathbb{G}_M$, nor do we typically require there to be any metric relationship between $\nabla^\pi$ and $\mathbb{G}_\pi$. However, in the Lipschitz topology, it is sometimes convenient to assume that $\nabla^M$ is the Levi-Civita connection for $\mathbb{G}_M$ and that $\nabla^\pi$ is $\mathbb{G}_\pi$-orthogonal, i.e., parallel transport consists of inner product preserving mappings.

Denote by $T^m(T^*M)$ the $m$-fold tensor product of $T^*M$ and $S^m(T^*M)$ the symmetric tensor bundle. The connection $\nabla^M$ induces a covariant derivative for tensor fields $A\in \Gamma^1(T^k_l(TM))$ on $M$, $k,l\in\mathbb{Z}_{\geq 0}$. This covariant derivative we denote by $\nabla^M$,  dropping reference to the particular $k$ and $l$.  Similarly, the connection $\nabla^\pi$ induces a covariant derivative for sections $B\in \Gamma^1(T^k_l(E))$  of the tensor bundles associated with $E$, $k,l\in\mathbb{Z}_{\geq 0}$. This covariant derivative we denote by $\nabla^\pi$,  dropping reference to the particular $k$ and $l$.
We will also consider differentiation of sections of $T^{k_1}_{l_1}(TM) \otimes T^{k_2}_{l_2}(E)$, and we denote the covariant derivative by $\nabla^{M,\pi}$. Note that
$$\nabla^{M,\pi,m}\xi:=\underbrace{\nabla^{M,\pi}\cdots(\nabla^{M,\pi}}_{m-1\ \text{times}}(\nabla^\pi\xi))\in\Gamma^\infty(T^m(T^*M)\otimes E).$$
For $\xi\in \Gamma^\infty(E)$ and $m\in\mathbb{Z}_{\geq 0}$, we define
$$D^m_{\nabla^M,\nabla^\pi}(\xi)=\text{Sym}_m\otimes\text{id}_E(\nabla^{M,\pi,m}\xi)\in\Gamma^\infty(S^m(T^*M)\otimes E),$$
where $\text{Sym}_m:T^m(T^*M)\rightarrow S^m(T^*M)$ is given by
$$\text{Sym}_m(v_1\otimes\cdots\otimes v_m)=\frac{1}{m!}\sum_{\sigma\in\mathfrak{S}_m}v_{\sigma(1)}\otimes\cdots\otimes v_{\sigma(m)}.$$
We take the convention that $D^0_{\nabla^M,\nabla^\pi}(\xi)=\xi$. We then have a map
\begin{eqnarray*}
      S^m_{\nabla^M,\nabla^{M,\pi}}:J^mE &\rightarrow&  \bigoplus\limits_{j=0}^{m}(S^j(T^*M)\otimes E) \\
	         j_m\xi(x)&\mapsto& (\xi(x),D^1_{\nabla^M,\nabla^\pi}(\xi)(x),...,D^m_{\nabla^M,\nabla^\pi}(\xi)(x))
\end{eqnarray*}
that can be verified to be an  isomorphism of vector bundles \cite[Lemma 2.1]{Jafarpourandlewis(2014)}. Note that inner products on the components of a tensor products induce an inner product on the tensor product in a
natural way \cite[Lemma 2.3]{Jafarpourandlewis(2014)}. Then we have a fibre metric in all tensor spaces associated with $TM$ and $E$ and their tensor products. We shall denote by $G_{M,\pi}$ any of these various fibre metrics. In particular, we have a fibre metric $\mathbb{G}_{M,\pi}$ on $T^j(T^*M)\otimes E$ for each $j\in \mathbb{Z}_{\geq 0}$. This thus gives us a fibre metric $\mathbb{G}_{M,\pi,m}$ on $J^mE$ defined by
\begin{equation}\label{eq:fibremetric}
    \mathbb{G}_{M,\pi,m}(j_m\xi(x),j_m\eta(x))=\sum_{j=0}^{m}\mathbb{G}_{M,\pi}\left(\frac{1}{j!}D^j_{\nabla^M,\nabla^\pi}(\xi)(x),\frac{1}{j!}D^j_{\nabla^M,\nabla^\pi}(\eta)(x)\right).
\end{equation}
Associated to this inner product on fibres is the norm on fibres, which we denote by $\|\cdot\|_{\mathbb{G}_{M,\pi,m}}$. We shall use these fibre norms continually in our descriptions of various topologies in the next section. We comment that the factorials are required to make sense of the real analytic topology (\cite{Jafarpourandlewis(2014)}).

\subsubsection{Topoligies for space of sections}
We can now quickly describe the topologies for $\Gamma^{\nu}(E)$ that we use in the paper. We refer to
\cite{Jafarpourandlewis(2014)} for details.
Let $r \in \{\infty, \omega, \hol\}$ and let $\pi: E \to M$ be a $C^r$-vector bundle, let $m \in \mathbb{Z}_{\geq 0}$ and let $m' \in \{0, \lip\}$, let $\nu \in \{m+m', \infty, \omega, \hol\}$. Let $G_\pi$ be an Hermitian metric on the vector bundle and denote by $\|\cdot\|_{G_\pi}$ the associated fibre norm. As we explained in the previous section, we will assume $\nabla^M$ to be the Levi-Civita connection for the metric $\mathbb{G}_M$ in the Lipschitz case. We denote by $c_0(\mathbb{Z}_{\geq 0}; \mathbb{R}_{>0})$ the positive sequences in $\mathbb{R}$ converging to $0$. We define seminorms for $\Gamma^{\nu}(E)$ as follows:
\begin{enumerate}
    \item $\nu=m$: for a compact $K\subseteq M$, denote 
    $$p^m_K(\xi)=\sup\{\|j_m\xi(x)\|_{G_{M,\pi,m}}\ |\ x\in K\};$$
    \item $\nu=m+\lip$: for a compact $K\subseteq M$, denote 
    $$\lambda^m_K(\xi)=\sup\{\text{dil}j_m\xi(x)\ |\ x\in K\}, \hspace{10pt}  p^{m+\text{lip}}_K(\xi)=\max\{\lambda^m_K(\xi), p^m_K(\xi)\};$$
    \item $\nu=\infty$: for a compact $K\subseteq M$ and $m\in \mathbb{Z}_{\geq 0}$, denoted 
    $$p^\infty_{K,m}(\xi)=\sup\{\|j_m\xi(x)\|_{\mathbb{G}_{M,\pi,m}}\ |\ x\in K\};$$
    \item $\nu=\omega$: for a compact $K\subseteq M$ and $\boldsymbol{a}\in c_0(\mathbb{Z}_{\geq 0};\mathbb{R}_{>0})$, denote 
    $$p^\omega_{K,\boldsymbol{a}}(\xi)=\sup\{a_0a_1...a_m \|j_m\xi(x)\|_{\mathbb{G}_{M,\pi,m}}\ |\ x\in K,\ m\in \mathbb{Z}_{\geq 0}\};$$
    \item $\nu=\hol$: for a compact $K\subseteq M$, denote 
    $$p^{\text{hol}}_K(\xi)=\sup\{\|\xi(z)\|_{\mathbb{G}_\pi}\ |\ z\in K\}.$$
\end{enumerate}
Because these seminorms have a similar character, we shall often denote, for a compact $K \subseteq M$, the seminorm for $\Gamma^\nu(E)$ by $p^{\nu}_{K}$, with the specific ornamentation associated with a specific $\nu$ suppressed. This will allow us to treat all regularity classes simultaneously when it is convenient to do so.

We comment that these seminorms make it clear that we have an ordering of the regularity classes as
$$m_1<m_1+\lip<m_2<m_2+\lip<\cdots<\cdots<\infty<\omega<\hol$$
from least regular (coarser topology) to more regular (finer topology), and where $m_1 < m_2$. These seminorms define a locally convex topology for $\Gamma^\nu(E)$ which we simply call the $\mathbf{C^\nu}$\textbf{-topology}. We note that $p^\nu_K$ is Hausdorff, seperable, complete, webbed and ultrabornological (thus metrisable) for all regularities except $\nu=\omega$, therefore, the Open Mapping Theorem holds (\cite{Adasch(1978)}).

\subsubsection{Topologies for space of mappings}
Let $r \in \{\infty,\omega,\hol\}$ and let $M$ and $N$ be $C^r$-manifolds. For $m \in \mathbb{Z}_{\geq 0}$, $m' \in \{0, \lip\}$, and $ \nu \in \{m + m', \infty, \omega, \hol\}$, we have the set $C^\nu(M;N)$ of $C^\nu$-mappings. This spaces can be equipped with the initial topology of the mapping 
\begin{eqnarray*}
\Psi_f:C^\nu(M;N) &\rightarrow&  C^\nu(M)\\
	\Phi&\mapsto& \Phi^*f
\end{eqnarray*}
for $f\in C^r(N)$. This is equivalent to the topology of uniform convergence of derivatives on compact subsets and can be characterized by the semimetrics 
$$d^\nu_{K,f}(\Phi_1, \Phi_2)= \sup\{p^\nu_K(f\circ\Phi_1(x)-f\circ \Phi_2(x))\, | \, x\in K\},$$
where $\,\,\, f\in C^r(N), \, K\subseteq M$ compact, and $p^\nu_K$ is one of the seminorms defined above for $C^\nu(M)$. The construction of such topology can be found in \cite{Michor1978} and \cite{ Hirsch1997}.

In a like manner, the space of jets of mappings, denoted by $\Gamma(J_m(M;N))$, can be equipped with the initial topology defined by the family of mappings
\begin{eqnarray*}
\Psi^m_f:\Gamma(J_m(M;N))&\rightarrow&  \Gamma(J_m(M;\mathbb{R}))\\
	j_m(\Phi)&\mapsto& j_m(\Phi^*f)
\end{eqnarray*}
for $f\in C^r(N)$. Thus this topology is defined by the family of semimetrics
$$d^{m,\nu}_{K,f}(j_m\Phi_1, j_m\Phi_2)= \sup\{p^{m,\nu}_K(j_m(f\circ\Phi_1)(x)-j_m(f\circ \Phi_2)(x))\, | \, x\in K\},$$
where $\,\,\, f\in C^r(N), \, K\subseteq M$ compact, and $p^{m,\nu}_K$ is a seminorms for $\Gamma(J_m(M;\mathbb{R}))$, described above. 

\subsubsection{Topologies for space of time- and parameter-dependent vector fields}
We provide a methodology for working with vector fields with measurable time-dependence and continuous parameter-dependence for the resulting flows of such vector fields. The notion of integrability (with time) we use is ``integrability by seminorm," which seems to originate in \cite{Garnir(1972)}. For Suslin spaces, such as we are working with, integrability by seminorm amounts to the requirement that the application of any continuous seminorm to the vector-valued functions yields a function in the usual scalar $\text{L}^1$-space. 

\textbf{Time-dependent vector fields.} Let $m\in \mathbb{Z}_{\geq 0}$, let $m'\in\{0,\text{lip}\}$, let $\nu\in\{m+m',\infty,\omega,\text{hol}\}$, and let $r\in \{\infty,\omega,\text{hol}\}$, as required. Let $\pi : E \rightarrow M$ be a $C^r$-vector bundle and let $\mathbb{T} \subseteq \mathbb{R}$ be an interval. We denote by $\text{L}^1_{\text{loc}}(\mathbb{T}; \Gamma^\nu(E))$, the space of time-varying sections which are locally Bochner integrable with respect to time. The seminorms on $\text{L}^1_{\text{loc}}(\mathbb{T}; \Gamma^\nu(E))$ can be provided as
\begin{equation*}
    p_{K,\mathbb{S},1}(X)=\int_{\mathbb{S}}p^\nu_K\circ X_t \;~d t,\hspace{10pt} \mathbb{S}\subseteq\mathbb{T} \text{ and } K\subseteq M \text{ compact.}
\end{equation*}
This family of seminorms makes $\text{L}^1_{\text{loc}}(\mathbb{T}; \Gamma^\nu(E))$ a locally convex space.
We commment that $$\text{L}^{1}_{\text{loc}}(\mathbb{T};\Gamma^{\nu}(E))\simeq \text{L}^{1}(\mathbb{T};\mathbb{R})\widehat{\otimes}_\pi \Gamma^{\nu}(E),$$
where $\widehat{\otimes}_\pi$ denotes the completed projective tensor product, cf.~\cite[Theorem 3.2]{Andrew(2022)}.

\textbf{Time- and parameter-dependent vector fields.} Let $\mathcal{P}$ be a topological space and consider a map $\xi:\mathbb{T}\times M\times \mathcal{P}\rightarrow E$ with the property that $\xi(t,x,p)\in E_x$ for each $(t,x,p)\in\mathbb{T}\times M\times \mathcal{P}$. Denote by $\xi^p:\mathbb{T}\times M\rightarrow E$ the map $\xi^p(t,x)=\xi(t,x,p)$. We denote $C^0(\mathcal{P};\text{L}^{1}_{\loc}(\mathbb{T};\Gamma^{\nu}(E)))$ the space of time- and parameter-dependent sections with the property that $\mathcal{P}\ni p\to \xi^p\in\text{L}^{1}_{\loc}(\mathbb{T};\Gamma^{\nu}(E))$ is continous. 

More explicitly, for members in $C^0(\mathcal{P};\text{L}^{1}_{\loc}(\mathbb{T};\Gamma^{\nu}(E)))$, we note that the conditions for such membership on $\xi$ are, just by definition: for each $p_0 \in \mathcal{P}$, for each compact $K \subseteq M$ and $\mathbb{S} \subseteq \mathbb{T}$, and for each $\epsilon \in \mathbb{R}_{>0}$, there exists a neighbourhood $\mathcal{O} \subseteq \mathcal{P}$ of $p_0$ such that
\begin{equation}\label{eq:2.4}
    \int_{\mathbb{S}}p^\nu_K(\xi^p_t-\xi^{p_0}_t)~d t<\epsilon,\hspace{10pt}p\in\mathcal{O}.
\end{equation}

\subsection{Topologies for local flows}\label{section2.2}
We shall study flows in two settings: flows solely defined on their own (codomain of map (\ref{exp})) and flows for vector fields (image of map (\ref{exp})). We shall define the first setting in this section and establish an appropriate topology based on the properties of flows. 

First we define what we mean by a absolutely continuous curve on a manifold, as this will be used in an essential way in our characterisation of flows.
\begin{defin}[Absolutely continuous curve]
    Let $\mathbb{T} \subseteq \mathbb{R}$ be an interval and let $M$ be a $C^r$-manifold. We assume that $M$ is Stein if $\nu=\text{hol}$. A curve $\xi : \mathbb{T} \to M$ is absolutely continuous if $f \circ\xi: \mathbb{T} \to\mathbb{R}$ is absolutely continuous for every $f \in C^r(M)$.
\end{defin}
\begin{defin}[$C^\nu$-local flow]\label{def:2.1}
Let $m\in \mathbb{Z}_{\geq 0}$, let $m'\in\{0,\text{lip}\}$, let $\nu\in\{m+m',\infty,\omega,\text{hol}\}$, and let $r\in \{\infty,\omega,\text{hol}\}$, as required. Let $M$ be a $C^r$-manifold, let $\mathbb{T}\subseteq\mathbb{T'}\subseteq \mathbb{R}$ be time intervals, and let $\mathcal{U}\subseteq M$ be open. We denote by $\text{LocFlow}^\nu(\mathbb{T'};\mathbb{T};\mathcal{U})$ the space of local flows $\Phi:\mathbb{T'}\times\mathbb{T}\times \mathcal{U}\rightarrow M$ with the following properties:
\begin{enumerate}
    \item $\Phi(t_0,t_0,x)=x,\;(t_0,t_0,x)\in \mathbb{T'}\times\mathbb{T}\times \mathcal{U}$;
    \item $\Phi(t_2,t_1,\Phi(t_1,t_0,x))=\Phi(t_2,t_0,x)$, $t_0,t_1,\in\mathbb{T},\; t_2\in\mathbb{T'}\;x\in\mathcal{U}$;
    \item the map $x\mapsto\Phi(t_1,t_0,x)$ is a $C^\nu$-diffeomorphism onto its image for every $t_0\in\mathbb{T}$ and $t_1\in\mathbb{T'}$;
	\item the map $t_0\mapsto\Phi_{t_1,t_0}\in C^\nu(\mathcal{U};M)$ is continuous for a fixed $t_1\in\mathbb{T'}$, and the map $t_1\mapsto\Phi_{t_1,t_0}\in C^\nu(\mathcal{U};M)$ is absolutely continuous for a fixed $t_0\in\mathbb{T}$, where $\Phi_{t_1,t_0}(x)=\Phi(t_1,t_0,x)$.
\end{enumerate}    
\end{defin}
\textbf{Topologies for parameter-independent local flows.} Note that a local flow $\Phi\in \text{LocFlow}^\nu(\mathbb{T'};\mathbb{T};\mathcal{U})$ defines absolutely continuous curves with respect to its final time,
$$\hat{\Phi}\in \rm{AC}(\mathbb{T'};C^0(\mathbb{T};C^\nu(\mathcal{U};M))),$$
by $\hat{\Phi}(t)(t_0)(x)=\Phi(t,t_0,x)$. Motivatied by this, we topologise $\text{LocFlow}^\nu(\mathbb{T'};\mathbb{T};\mathcal{U})$ as follows.

First, we give $C^0(\mathbb{T};C^\nu(\mathcal{U}))$ the compact-open topology defined by the seminorms 
$$p^\nu_{K,\mathbb{I}}(g)=\sup\{p^\nu_K\circ g(t_0)\ |\ t_0\in\mathbb{I})\},$$
where $\mathbb{I}\subseteq\mathbb{T}$ a compact interval and $p^\nu_K$ is the appropriate seminorm for $C^\nu(\mathcal{U})$. Second, we give $\rm{AC}(\mathbb{T'};C^0(\mathbb{T};C^\nu(\mathcal{U})))$ the topology defined by the seminorms 
$$q^\nu_{K,\mathbb{I},\mathbb{I'}}(g)=\max\{p^\nu_{K,\mathbb{I},\mathbb{I'},\infty}(g),\ \hat{p}^\nu_{K,\mathbb{I},\mathbb{I'},1}(g)\},$$
where 
$$p^\nu_{K,\mathbb{I},\mathbb{I'},\infty}(g)=\sup\{p^\nu_{K,\mathbb{I}}\circ g(t)\ |\ t\in\mathbb{I'}\}\hspace{5pt}\text{ and }\hspace{5pt} \hat{p}^\nu_{K,\mathbb{I},\mathbb{I'},1}(g)=\int_{\mathbb{I'}}p^\nu_{K,\mathbb{I}}\circ g'(t) dt,$$
$\mathbb{I}\subseteq\mathbb{T}$, $\mathbb{I'}\subseteq\mathbb{T'}$ compact intervals. Note that, for $f\in C^r(M)$, the composition $f\circ\Phi\in \rm{AC}(\mathbb{T'};C^0(\mathbb{T};C^\nu(\mathcal{U})))$ and the topology of this space can be given more explicitly by the maximum of 
$$p_{K,\mathbb{I},\mathbb{I'},\infty}^{\nu}(f\circ \Phi)=\sup \left\{p_{K}^{\nu}(f\circ\Phi_{t_1,t_0})\ |\ (t_1,t_0)\in\mathbb{I'}\times\mathbb{I}\right\}$$
and 
\begin{eqnarray*}
   \hat{p}^\nu_{K,\mathbb{I},\mathbb{I'},1}(f\circ\Phi)&=&\int_{\mathbb{I'}} p^\nu_{K,\mathbb{I}} \left(\frac{d}{d t}(f\circ\Phi_{t,t_0})\right)d t\\
   &=& \int_{\mathbb{I'}} p^\nu_{K,\mathbb{I}} \left(\langle d f(\Phi_{t,t_0}),\frac{d}{d t}\Phi_{t,t_0}\rangle\right) dt\\
   &=& \int_{\mathbb{I'}} \sup\left\{p^\nu_K\left(\langle d f(\Phi_{t,t_0}),\frac{d}{d t}\Phi_{t,t_0}\rangle\right)\ \bigg|\ t_0\in\mathbb{I}\right\} d t,
\end{eqnarray*}
where $K\subseteq M$ and $\mathbb{I}\subseteq\mathbb{I'}\subseteq \mathbb{T}$ are compact.

Finally, we give $\rm{AC}(\mathbb{T'};C^0(\mathbb{T};C^\nu(\mathcal{U};M)))$ the initial topology associated with the mappings
\begin{eqnarray*}
    \Psi_f:\rm{AC}(\mathbb{T'};C^0(\mathbb{T};C^\nu(\mathcal{U};M)))&\rightarrow& \rm{AC}(\mathbb{T'};C^0(\mathbb{T};C^\nu(\mathcal{U})))\\
       \Phi&\mapsto& f\circ\Phi.
\end{eqnarray*}
for $f\in C^r(M)$.

\textbf{Parameter-dependent local flows.} Let $\mathcal{P}$ be a topological space. We denote by $\text{LocFlow}^\nu(\mathbb{T'};\mathbb{T};\mathcal{U};\mathcal{P})$ the set of parameter dependent local flows $\Phi^p$ such that 
$$\mathcal{P}\ni p\mapsto \Phi^p\in \text{LocFlow}^\nu(\mathbb{T'};\mathbb{T};\mathcal{U})$$
are continuous, i.e., $C^0(\mathcal{P};\text{LocFlow}^\nu(\mathbb{T'};\mathbb{T};\mathcal{U}))$.

Notice we have used the following analogous notations to avoid the abuse of notation:
\begin{enumerate}
    \item $\text{LocFlow}^\nu(\mathbb{T'};\mathbb{T};\mathcal{U})=\rm{AC}(\mathbb{T'};C^0(\mathbb{T};C^\nu(\mathcal{U};M)))$;
    \item $\text{LocFlow}^\nu(\mathbb{T'};\mathbb{T};\mathcal{U};\mathcal{P})= C^0(\mathcal{P};\rm{AC}(\mathbb{T'};C^0(\mathbb{T};C^\nu(\mathcal{U};M))))$.
\end{enumerate}
\section{Flows for vector fields}\label{section3}
We have defined, very generally, the space of local flows and  its topologies in Section \ref{section2.2}. However, the question arises that for $X\in C^0(\mathcal{P};\text{L}^{1}_{\loc}(\mathbb{T};\Gamma^{\nu}(TM)))$, if its flow $\Phi^X\in\text{LocFlow}^\nu(\mathbb{S'};\mathbb{S};\mathcal{U};\mathcal{P})$ for some $\mathbb{S}\subseteq\mathbb{S'}\subseteq\mathbb{T}$. To answer this question, we shall give geometric definitions and characterisations of integral curves and flows for vector fields. 

\subsection{Integral curves and flows for vector fields}
A curve $\xi:\mathbb{S'}\rightarrow M$ for $X^p\in C^0(\mathcal{P};\text{L}^{1}_{\loc}(\mathbb{T};\Gamma^{\nu}(TM)))$ is an \textbf{integral curve} if
\begin{enumerate}
    \item $\xi$ is locally absolutely continuous and $\xi'(t)=X^p(t,\xi(t))$ for almost every $t\in\mathbb{S'}$, $\mathbb{T'}\subseteq \mathbb{T}$,
    \item or equivalently,
    $$f\circ \xi(t)=f\circ\xi(t_0)+\int^{t}_{t_0}X^pf(s,\xi(s))ds$$
    for each $t_0\in\mathbb{T'}$ and each $f\in C^r(M)$. 
\end{enumerate}
We denote 
\begin{multline*}
    D_X=\{\mathbb{S'}\times\mathbb{S}\times\mathcal{U}\times \mathcal{O}\subseteq\mathbb{T}\times\mathbb{T}\times M\times\mathcal{P}\;|\; \text{ there exists an integral curve} \\
    \xi:\mathbb{S'}\to M \text{ for } X^{p_0} \text{ such that } \xi(t_0)=x_0 \text{ for each } (t_0,x_0,p_0)\in \mathbb{S}\times\mathcal{U}\times\mathcal{O} \}.
\end{multline*}
The \textbf{\textit{flow}} for $X$ is the mapping
\begin{eqnarray*}
\Phi^X:D_X &\rightarrow&  M\\
	         (t_1,t_0,x_0,p_0)&\mapsto& \xi(t_1),
\end{eqnarray*}
where $\xi$ is the integral curve for $X^{p_0}$ satisfying $\xi(t_0)=x_0$. Hence the flow $\Phi^X$ satisfies 
$$f\circ \Phi^X(t,t_0,x,p) = f(x)+\int^{t}_{t_0}X^pf(s,t_0,x)d s$$
for each $t_0\in\mathbb{T'}$ and each $f\in C^r(M)$.

We will work with time-varying vector fields, both with and without parameter dependence. When we work with vector fields that are time-dependent but not parameter-dependent, we will simply omit the argument corresponding to the parameter without further mention. We shall also use the notation
$$\Phi^{X^p}_{t_1,t_0}:D_X(t_1,t_0,p)\rightarrow M$$
when convenient.

We make an important remark that the parameter-independent flows $\Phi^{X^p}$ (for fixed $p$) for vector fields $X^p$ have elementary properties that coincide with Definition \ref{def:2.1}. This is to say, the flows for vector fields (image of map (\ref{exp})) are contained in the space of flows defined solely on its own (Section \ref{section2.2}, codomain of map (\ref{exp})). This is also true for the parameter-dependent case, which will be shown in Section \ref{section4}. 
\subsection{Compactness of image of flows}
In this section and the next, we establish some technical results that will be useful in Section \ref{section4}. This first result gives an important compactness result. We emphasize there the fact that there are no restrictions on the topological space $\mathcal{P}$.
\begin{lemma}[Robustness of compactness by variations of parameters]\label{cor:3.11}
Let $M$ be a $C^\infty$-manifold, let $\mathbb{T} \subseteq \mathbb{R}$ be an interval, let $\mathcal{P}$ be a topological space, and let $X \in C^0(\mathcal{P};\rm{L}^{1}_{\loc}(\mathbb{T};\Gamma^{\lip}(TM)))$. Let $K \subseteq M$ be compact, let $t_0, t_1 \in \mathbb{T}$, and let $p_0 \in \mathcal{P}$ be such that
\begin{equation*}
    [t_0, t_1] \times \{t_0\} \times K\times\{p_0\} \subseteq D_X. 
\end{equation*}
Then there exists a neighbourhood $\mathcal{O} \subseteq \mathcal{P}$ of $p_0$ such that
\begin{equation*}
    \bigcup_{(t,x,p)\in[t_0,t_1]\times K\times\mathcal{O}} \Phi^X(t,t_0,x,p)
\end{equation*}
is well-defined and precompact.
\end{lemma}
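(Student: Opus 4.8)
The plan is to show that, for a suitable neighbourhood $\mathcal{O}$ of $p_0$, the entire family of trajectories stays trapped inside one fixed compact subset of $M$, whence the union in the statement is well-defined and precompact. First I would record that, for the single parameter $p_0$, the set
$$\mathcal{K}_0 = \bigcup_{(t,x)\in[t_0,t_1]\times K}\Phi^X(t,t_0,x,p_0)$$
is compact, being the image of the compact set $[t_0,t_1]\times K$ under the map $(t,x)\mapsto\Phi^X(t,t_0,x,p_0)$, which is continuous by the standard continuous dependence of Carath\'eodory solutions on time and initial condition for $\Gamma^{\lip}$-regular, $\text{L}^1_{\loc}$-in-time vector fields. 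Fix a $C^\infty$-Riemannian metric $\mathbb{G}_M$ on $M$ with distance $\dist$, choose a relatively compact open set $\mathcal{V}$ with $\mathcal{K}_0\subseteq\mathcal{V}$ and $\overline{\mathcal{V}}$ compact, and set $\delta=\dist(\mathcal{K}_0,M\setminus\mathcal{V})>0$.

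Next I would freeze the two quantitative ingredients on $\overline{\mathcal{V}}$. Since $X^{p_0}\in\text{L}^1_{\loc}(\mathbb{T};\Gamma^{\lip}(TM))$, both the integrated dilatation $L:=\int_{t_0}^{t_1}\lambda^0_{\overline{\mathcal{V}}}(X^{p_0}_t)\,d t$ and the integrated fibre-norm bound $\int_{t_0}^{t_1}p^{0+\lip}_{\overline{\mathcal{V}}}(X^{p_0}_t)\,d t$ are finite; the former supplies a Gronwall rate and the latter a uniform speed bound. Fixing $\epsilon>0$ with $\epsilon\,e^{L}<\delta$, I then apply the parameter-continuity hypothesis \eqref{eq:2.4} with the compact set $\overline{\mathcal{V}}$ and $\mathbb{S}=[t_0,t_1]$ to produce a neighbourhood $\mathcal{O}$ of $p_0$ such that $\int_{t_0}^{t_1}p^{0+\lip}_{\overline{\mathcal{V}}}(X^p_t-X^{p_0}_t)\,d t<\epsilon$ for all $p\in\mathcal{O}$. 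It is precisely here, and only here, that the arbitrariness of $\mathcal{P}$ is accommodated: the hypothesis is already phrased in terms of neighbourhoods rather than sequences, so no countability of $\mathcal{P}$ is invoked.

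The heart of the argument is a trapping estimate by continuation. Fix $p\in\mathcal{O}$ and $x\in K$, let $\xi^p$ and $\xi^{p_0}$ be the integral curves from $x$ at time $t_0$, and let $T^*\in(t_0,t_1]$ be the supremum of times up to which $\xi^p$ exists and satisfies $\xi^p(s)\in\mathcal{V}$ for all $s\in[t_0,T^*)$. On this interval I compare the curves through the absolutely continuous function $t\mapsto\dist(\xi^p(t),\xi^{p_0}(t))$: splitting the velocity discrepancy $X^p_t(\xi^p(t))-X^{p_0}_t(\xi^{p_0}(t))$ into a parameter part bounded by $p^{0+\lip}_{\overline{\mathcal{V}}}(X^p_t-X^{p_0}_t)$ and a state part bounded by $\lambda^0_{\overline{\mathcal{V}}}(X^{p_0}_t)\,\dist(\xi^p(t),\xi^{p_0}(t))$, the integrated Gronwall inequality yields $\dist(\xi^p(t),\xi^{p_0}(t))\le\epsilon\,e^{L}<\delta$. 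Hence $\xi^p(t)$ stays within distance $\delta$ of $\mathcal{K}_0$ and cannot reach $M\setminus\mathcal{V}$; combined with the uniform speed bound, which precludes finite-time escape from the compact $\overline{\mathcal{V}}$ and so permits continuation via local existence, this forces $T^*=t_1$ and $\xi^p([t_0,t_1])\subseteq\mathcal{V}$. Consequently the union in the statement is contained in $\overline{\mathcal{V}}$, and is therefore well-defined and precompact.

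I expect the main obstacle to be making the geometric Gronwall estimate rigorous: the first-variation-of-arclength bound for $\tfrac{d}{dt}\dist(\xi^p(t),\xi^{p_0}(t))$ must be justified for curves that need not lie in a single chart, the dilatation $\lambda^0_{\overline{\mathcal{V}}}$ (defined via parallel transport) must be related to the genuine Lipschitz constant in $\dist$, and the relevant geometric constants must be absorbed using compactness of $\overline{\mathcal{V}}$, all in the merely $\text{L}^1$-in-time regime where the inequalities hold only almost everywhere and must be integrated. A safe fallback, should the intrinsic estimate prove delicate, is to cover $\overline{\mathcal{V}}$ by finitely many charts (or invoke a Whitney embedding into some $\mathbb{R}^N$) and run the classical Euclidean Gronwall argument on $\overline{\mathcal{V}}$, where the fibre norms, the Lipschitz constants, and the Euclidean distance are mutually equivalent up to constants.
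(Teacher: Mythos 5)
Your argument is correct in outline and reaches the right conclusion, but it takes a genuinely different route from the paper's. Both proofs begin identically: compactness of $K_0=\bigcup_{(t,x)\in[t_0,t_1]\times K}\Phi^X(t,t_0,x,p_0)$ and the choice of a precompact neighbourhood $\mathcal{V}\supseteq K_0$. From there the paper argues softly: for each $x\in K$ it \emph{asserts} the existence of neighbourhoods $\mathcal{U}_x$ of $x$ and $\mathcal{O}_x$ of $p_0$ whose trajectories remain in $\mathcal{V}$, and then finishes with a finite subcover of $K$ and an intersection of the $\mathcal{O}_{x_j}$; the local trapping statement itself is left to rest on joint continuity of $\Phi^X$ on $D_X$ (including in the parameter) together with a tube-lemma argument over $[t_0,t_1]$, none of which is spelled out. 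You instead prove the trapping directly and quantitatively: a single application of the membership condition \eqref{eq:2.4} on the compact set $\mathrm{cl}(\mathcal{V})$ with the $0+\lip$ seminorm, a Gronwall comparison of $\dist(\xi^p(t),\xi^{p_0}(t))$ splitting the velocity discrepancy into a parameter part and a dilatation-controlled state part, and a continuation argument forcing $T^*=t_1$. What your approach buys is self-containedness and an explicit accounting of where the hypothesis $X\in C^0(\mathcal{P};\mathrm{L}^1_{\loc}(\mathbb{T};\Gamma^{\lip}(TM)))$ enters — the one covering over $K$ that the paper performs is replaced by one uniform estimate — at the price of the geometric technicalities you correctly flag (relating the parallel-transport dilatation to Lipschitz constants for $\dist_{\mathbb{G}}$, and differentiating the distance function a.e.), for which your chart-covering or Whitney-embedding fallback is an adequate remedy since everything happens inside the fixed compact set $\mathrm{cl}(\mathcal{V})$. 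The paper's proof is shorter but defers exactly the estimate you supply; yours would serve as a justification of its unproven local step.
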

\begin{proof}
Denote
$$K_0:=\bigcup_{(t,x)\in[t_0,t_1]\times K} \Phi^{X}(t,t_0,x,p_0).$$
Since $[t_0,t_1]\times t_0\times K\times p_0$ is compact and $\Phi^{X}:D_X\to M$ is continuous, we have that $K_0$ is compact. Since $M$ is locally compact, let $\mathcal{V}$ be a precompact neighbourhood of $K_0$. For $x \in K$, let $\mathcal{U}_x$ be a neighbourhood of $x$ and let $\mathcal{O}_x$ be a neighbourhood of $p_0$ such that
$$\bigcup_{(t,x,p)\in[t_0,t_1]\times (K\cap\mathcal{U}_x)\times\mathcal{O}_x} \Phi^X(t,t_0,x,p)\subseteq\mathcal{V}.$$
By compactness of $K$, let $x_1, . . . , x_m \in K$ be such that
$K=\cup_{j=1}^{m}K\cap \mathcal{U}_{x_j}$ and let $\mathcal{O}=\cap_{j=1}^{k}\mathcal{O}_{x_j}$. Then 
$$\Phi^X(t,t_0,x,p)\subseteq\mathcal{V},\hspace{10pt}(t,x,p)\in[t_0,t_1|]\times K\times\mathcal{O},$$
as desired.
\end{proof} 
\subsection{Integrable sections along a curve}\label{appendixb}
We denote by $\text{Aff}^r(E) \subseteq C^r(E)$ the set of $C^r$-functions $G$ on the manifold $E$ for which $G|E_x$ is affine for each $x \in M$. We denote by $L^1_{\loc}(\mathbb{T}; E)$ the mappings $\Gamma: \mathbb{T} \rightarrow E$ for which $G \circ \Gamma \in L^1_{\loc}(\mathbb{T};\mathbb{F})$ for every $G \in \text{Aff}^r(E)$. Note that, if $\Gamma \in L^1_{\loc}(\mathbb{T}; E)$, then there is a mapping $\gamma : \mathbb{T} \rightarrow M$ specified by the requirement that the diagram
\[\begin{tikzcd}
\mathbb{T} \arrow{dr}{\gamma} \arrow{r}{\Gamma}
& E \arrow{d}{\beta} \\
& M
\end{tikzcd}\]
commutes. We can think of $\Gamma$ as being a section of $E$ over $\gamma$. We topologise $L^1_{\loc}(\mathbb{T}; E)$ by
giving it the initial topology associated with the mappings
\begin{eqnarray*}
  \alpha_F:L^1_{\loc}(\mathbb{T}; E)&\rightarrow& L^1(\mathbb{S};\mathbb{F})\\
  \Gamma&\mapsto& G\circ\Gamma|\mathbb{S},
\end{eqnarray*}
where $G\in\text{Aff}^r(E)$ and $\mathbb{S}\subseteq\mathbb{T}$ a compact subinterval.

Associated to these spaces of integrable sections over a curve, we have a few constructions and technical results whose importance will be made apparent at various points during the subsequent presentation. We consider the space $C^0 (\mathbb{T}; M)$ with the topology (indeed,
uniformity) defined by the family of semimetrics 
\begin{equation}\label{eq:gam12}
    d_{\mathbb{S},M}(\gamma_1, \gamma_2) = \sup\{d_\mathbb{G}(\gamma_1(t), \gamma_2(t))\ |\  t \in \mathbb{S}\}, \hspace{10pt} \mathbb{S} \subseteq \mathbb{T} \text{ a compact interval.}
\end{equation}
We consider this in the following context. We consider $C^r$-vector bundles $\pi_E : E \rightarrow M$ and $\pi_F : F \rightarrow N$. We abbreviate
$$E^* \otimes F = \text{pr}^*_1 E^* \otimes \text{pr}^*_2 F,$$
where $\text{pr}_1: M \times N \rightarrow M$ and $\text{pr}_2: M \times N \rightarrow N$ are the projections. We regard $E^* \otimes F$ as a
vector bundle over $M \times N$. The fibre over $(x, y)$ we regard as
$$E^*_x \otimes F_y \simeq \text{Hom}_\mathbb{F}(E_x; F_y).$$
Note that the total spaces $E$ and $F$ of these vector bundles, and so also $E^* \otimes F$, inherit a Riemannian metric from a Riemannian metrics on their base spaces and fibre metrics. Thus $E^*\otimes F$ possess the associated distance function, and we shall make use of this to define, as in (\ref{eq:gam12}), a topology on the space $C^0(\mathbb{T}; E^*\otimes F)$. If $\Gamma \in C^0(\mathbb{T}; E^*\otimes F)$, then we have induced mappings
$$\gamma_M \in C^0(\mathbb{T}; M),\hspace{10pt} \gamma_N \in C^0(\mathbb{T}; N)$$
obtained by first projecting to $M \times N$, and then projecting onto the components of the product. Note that $\Gamma(t) \in \text{Hom}_\mathbb{F}(E_{\pi_E\circ\Gamma(t)}; F_{\pi_F\circ\Gamma(t)}),\  t \in \mathbb{T}$.

If $\xi : \mathbb{T} \times M \rightarrow E$ satisfies $\xi(t, x) \in E_x$ and if $\Gamma: \mathbb{T} \rightarrow E^* \otimes F$, then we can define
\begin{eqnarray*}
  \xi_\Gamma:\mathbb{T}&\to&F\\
  t&\mapsto&\Gamma(t)(\xi(t,\gamma_M(t))).
\end{eqnarray*}
We call $\xi_\Gamma$ the composite section associated with $\xi$ and $\Gamma$. The following lemma shows that this mapping is integrable, under suitable hypotheses on $\xi$ and $\Gamma$.
\begin{lemma}[Integrability of composite section]\label{lem:2.5}
Let $\pi_E : E \to M$ and $\pi_F : F \to M$ be $C^\infty$-vector bundles and let $\mathbb{T} \subseteq \mathbb{R}$ be an interval. If $\xi \in \Gamma^1_{\text{loc}}(\mathbb{T}; \Gamma^\nu(E))$ and if $\Gamma \in C^0(\mathbb{T}; E^*\otimes F)$, then $\xi_\Gamma \in L^1_{\loc}(\mathbb{T}; F)$.
\end{lemma}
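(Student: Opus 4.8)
The plan is to unwind the definition of $\mathrm{L}^1_{\loc}(\mathbb{T};F)$: since that space carries the initial topology associated with the maps $\alpha_G\colon\Gamma\mapsto G\circ\Gamma$, it suffices to show that $G\circ\xi_\Gamma\in\mathrm{L}^1_{\loc}(\mathbb{T};\mathbb{F})$ for every $G\in\operatorname{Aff}^r(F)$. Fix such a $G$. Because $G$ restricts to an affine map on each fibre $F_y$ and is of class $C^r$, it decomposes as $G(v)=\langle\ell(\pi_F(v)),v\rangle+g(\pi_F(v))$, where $\ell\in\Gamma^r(F^*)$ is the fibrewise derivative of $G$ and $g\in C^r(M)$ is given by $g(y)=G(0_y)$. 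Writing $\gamma_M,\gamma_N\in C^0(\mathbb{T};M)$ for the base curves induced by $\Gamma$, recalling that $\Gamma(t)\in\operatorname{Hom}_{\mathbb{F}}(E_{\gamma_M(t)};F_{\gamma_N(t)})$ and that $\xi_\Gamma(t)=\Gamma(t)(\xi_t(\gamma_M(t)))$ with $\xi_t:=\xi(t,\cdot)\in\Gamma^\nu(E)$, I obtain
$$G(\xi_\Gamma(t))=\big\langle \Gamma(t)^*\ell(\gamma_N(t)),\,\xi_t(\gamma_M(t))\big\rangle+g(\gamma_N(t)),$$
where $\Gamma(t)^*\colon F^*_{\gamma_N(t)}\to E^*_{\gamma_M(t)}$ is the adjoint. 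The second summand $t\mapsto g(\gamma_N(t))$ is continuous, hence locally integrable, so the whole matter reduces to the first, linear-in-$\xi_t$, summand.

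For the size estimate, fix a compact interval $\mathbb{S}\subseteq\mathbb{T}$ and set $K_M:=\gamma_M(\mathbb{S})$, a compact subset of $M$. Since the $0$-jet component is dominated by the full jet norm, there exist a continuous seminorm $p$ on $\Gamma^\nu(E)$ and a constant $C'$ with $\|\eta(x)\|_{\mathbb{G}_\pi}\le C' p(\eta)$ for all $\eta\in\Gamma^\nu(E)$ and $x\in K_M$ (one may take $p=p^\nu_{K_M}$, up to the normalising constant $a_0^{-1}$ in the real analytic case). As $t\mapsto\Gamma(t)^*\ell(\gamma_N(t))$ is continuous into $E^*$ over the compact curve $\gamma_M|\mathbb{S}$, its fibre norm is bounded by some $C_\Gamma<\infty$ on $\mathbb{S}$. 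Hence
$$\big|\big\langle \Gamma(t)^*\ell(\gamma_N(t)),\,\xi_t(\gamma_M(t))\big\rangle\big|\le C_\Gamma\,\|\xi_t(\gamma_M(t))\|_{\mathbb{G}_\pi}\le C_\Gamma C'\,p(\xi_t),\qquad t\in\mathbb{S}.$$
The right-hand side lies in $\mathrm{L}^1(\mathbb{S})$ precisely because membership $\xi\in\mathrm{L}^1_{\loc}(\mathbb{T};\Gamma^\nu(E))$ means, by integrability-by-seminorm, that $t\mapsto p(\xi_t)\in\mathrm{L}^1(\mathbb{S})$. Together with the continuous term $g\circ\gamma_N$ this furnishes the required $\mathrm{L}^1(\mathbb{S})$ bound on $G\circ\xi_\Gamma$.

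The step I expect to require the most care is the \emph{measurability} of $t\mapsto G(\xi_\Gamma(t))$, as the bound above controls only size. I would obtain it from a Carath\'eodory superposition argument: for each $t$ the functional $L_t(\eta):=\langle\Gamma(t)^*\ell(\gamma_N(t)),\eta(\gamma_M(t))\rangle$ is a continuous linear form on $\Gamma^\nu(E)$, and for each fixed $\eta\in\Gamma^\nu(E)$ the map $t\mapsto L_t(\eta)$ is continuous (since $\Gamma$, $\ell$, $\gamma_M$, $\gamma_N$, and $\eta$ are all continuous). Combining this with the measurability of $t\mapsto\xi_t$ built into membership in $\mathrm{L}^1_{\loc}(\mathbb{T};\Gamma^\nu(E))$ — using separability of $\Gamma^\nu(E)$ for $\nu\neq\omega$, and the Suslin/integrability-by-seminorm framework for $\nu=\omega$ — one concludes that $t\mapsto L_t(\xi_t)$ is measurable; concretely, one approximates $\xi$ almost everywhere by simple functions $\xi^{(n)}$, for which $t\mapsto L_t(\xi^{(n)}_t)$ is visibly measurable, and passes to the limit using continuity of each $L_t$. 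Adding back the continuous term $g\circ\gamma_N$ and invoking the estimate above then gives $G\circ\xi_\Gamma\in\mathrm{L}^1(\mathbb{S};\mathbb{F})$ for every compact $\mathbb{S}$ and every $G\in\operatorname{Aff}^r(F)$, which is exactly $\xi_\Gamma\in\mathrm{L}^1_{\loc}(\mathbb{T};F)$.
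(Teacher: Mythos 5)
Your proof is correct, and it reaches the same two milestones as the paper's proof --- measurability of $t\mapsto G\circ\xi_\Gamma(t)$ followed by an integrable dominating bound on a compact subinterval --- but it gets to each by a noticeably different route. For the bound, the paper simply invokes the existence of $h\in L^1(\mathbb{S};\mathbb{R}_{\geq 0})$ dominating $|G\circ\Gamma(t)(\xi(t,x))|$ uniformly over a compact $K\supseteq\gamma_M(\mathbb{S})$, whereas you derive it explicitly by splitting $G$ into its fibrewise-linear and constant parts and estimating the linear part against the seminorm $p^\nu_{K_M}(\xi_t)$; your version is more transparent about where integrability-by-seminorm enters. The real divergence is in the measurability step. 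The paper works entirely at the scalar level: it observes that $(s,t)\mapsto G\circ\Gamma(s)(\xi(t,\gamma_M(s)))$ is measurable in $t$ for fixed $s$ and continuous in $s$ for fixed $t$, then approximates the diagonal by step functions in the $s$-variable. You instead approximate $\xi$ itself by simple functions in $\Gamma^\nu(E)$ and pass to the limit through the continuous linear functionals $L_t$. Both are Carath\'eodory-type arguments, but the paper's requires only pointwise measurability of $t\mapsto\xi_t(x)$ and sidesteps vector-valued measurability entirely, while yours leans on strong (Bochner) measurability of $t\mapsto\xi_t$ and hence on the Pettis/Suslin machinery you flag for $\nu=\omega$; that hypothesis is available here since the paper takes $L^1_{\loc}(\mathbb{T};\Gamma^\nu(E))$ to consist of locally Bochner-integrable curves, so nothing is lost, but the paper's scalar-level argument is the more economical of the two.
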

\begin{proof}
We first show that $t\mapsto G\circ\xi_\Gamma$ is measurable on $\mathbb{T}$. Note that 
$$t\mapsto G\circ\Gamma(s)(\xi(t,\gamma_M(s)))$$
is measurable for each $s \in \mathbb{T}$ and that
\begin{equation}\label{eq:2.5}
    s\mapsto G\circ\Gamma(s)(\xi(t,\gamma_M(s)))
\end{equation}
is continuous for each $t \in \mathbb{T}$. Let $[a, b] \subseteq \mathbb{T}$ be compact, let $k \in \mathbb{Z}_{>0}$, and denote
$$t_{k,j} = a + \frac{j-1}{k}(b - a), \hspace{10pt} j \in \{1, . . . , k + 1\}.$$
Also denote 
$$\mathbb{T}_{k,j} = [t_{k,j} , t_{k,j+1}), \hspace{10pt} j \in \{1, . . . , k - 1\},$$
and $\mathbb{T}_{k,k} = [t_{k,k}, t_{k,k+1}]$. Then define $g_k : \mathbb{T} \rightarrow \mathbb{R}$ by
$$g_k(t)=\sum_{j=1}^{k}G\circ\Gamma(t_{k,j})(\xi(t,\gamma_M(t_{k,j})))\chi_{t_{k,j}}.$$
Note that $g_k$ is measurable, being a sum of products of measurable functions (\cite[Proposition 2.1.7]{Cohn(2013)}). By continuity of (\ref{eq:2.5}) for each $t \in \mathbb{T}$, we have
$$\lim_{k\to\infty}g_k(t) = G\circ\Gamma(t)(\xi(t,\gamma_M(t))),\hspace{10pt} t \in [a, b],$$
showing that $t\mapsto G\circ\Gamma(t)(\xi(t,\gamma_M(t)))$ is measurable on $[a,b]$, as pointwise limits of measurable functions are measurable \cite[Proposition 2.1.5]{Cohn(2013)}. Since the compact interval $[a, b] \subseteq \mathbb{T}$ is arbitrary, we conclude that $t \mapsto  G\circ\Gamma(t)(\xi(t,\gamma_M(t)))$ is measurable on $\mathbb{T}$.

Let $\mathbb{S} \subseteq \mathbb{T}$ be compact and let $K \subseteq M$ be a compact set for which $\gamma_M(\mathbb{S}) \subseteq K$. Since $\xi\in \Gamma^0_{\text{LI}}(\mathbb{T};M)$ and since $\Gamma$ is continuous, there exists $h\in L^1(\mathbb{S};\mathbb{R}_{\geq 0})$ be such that 
$$|G\circ \Gamma(t)(\xi(t,x))|\leq h(t) \hspace{20pt} (t,x)\in\mathbb{S}\times K,$$
In particular, this shows that $t \mapsto G\circ\Gamma(t)(\xi(t,\gamma_M(t)))$ is integrable on $\mathbb{S}$ and so locally integrable on $\mathbb{T}$.
\end{proof}
The following simplified version of the lemma will be useful.
\begin{cor}[Integrability of composite section]\label{cor:2.7}
Let $M$ be a $C^\infty$-manifold and let $\mathbb{T} \subseteq \mathbb{R}$ be an interval. If $f \in L^1_{\text{loc}}(\mathbb{T}; C^\nu(M))$, if $\gamma \in C^0(\mathbb{T}; M)$, and if we define $f_\gamma : \mathbb{T} \to \mathbb{R}$ by $f_\gamma(t) = f(t, \gamma(t))$, then $f_\gamma \in L^1_{\rm{loc}}(\mathbb{T}; \mathbb{R})$.
\end{cor}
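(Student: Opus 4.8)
The plan is to recognise $f_\gamma$ as a composite section $\xi_\Gamma$ for a suitable choice of bundles and bundle map, and then invoke Lemma \ref{lem:2.5}. The key observation is that the general lemma already carries out all of the measurability and domination work; the corollary is nothing more than the special case in which both bundles are trivial line bundles and $\Gamma$ is the identity along $\gamma$.

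Concretely, I would take $E = F = M \times \mathbb{R}$, the trivial line bundle over $M$, which is a $C^\infty$-vector bundle. Under the standard identification $C^\nu(M) \cong \Gamma^\nu(E)$ --- a $C^\nu$-function being exactly the data of a $C^\nu$-section of the trivial line bundle, with matching $C^\nu$-topologies --- the hypothesis $f \in L^1_{\loc}(\mathbb{T}; C^\nu(M))$ becomes $\xi \in L^1_{\loc}(\mathbb{T}; \Gamma^\nu(E))$, where $\xi(t,x) = (x, f(t,x))$. Since $E$ and $F$ are trivial, the fibre of $E^* \otimes F$ over $(x,y)$ is $\text{Hom}_{\mathbb{F}}(\mathbb{R}; \mathbb{R}) \cong \mathbb{R}$. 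I would set $\gamma_M = \gamma_N = \gamma$ and let $\Gamma(t)$ be the identity endomorphism $\mathrm{id}_{\mathbb{R}}$ in this trivialisation, i.e. the section of $E^* \otimes F$ over $(\gamma, \gamma)$ whose fibre component is the constant $1$. Because $\gamma$ is continuous and the fibre component is constant, $\Gamma \in C^0(\mathbb{T}; E^* \otimes F)$; and by construction $\xi_\Gamma(t) = \Gamma(t)(\xi(t, \gamma(t))) = f(t, \gamma(t)) = f_\gamma(t)$.

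With these choices Lemma \ref{lem:2.5} applies verbatim and gives $\xi_\Gamma = f_\gamma \in L^1_{\loc}(\mathbb{T}; F)$. To pass to the asserted scalar statement I would compose with the fibre projection $\mathrm{pr}_2 : F = M \times \mathbb{R} \to \mathbb{R}$, which is linear on each fibre and hence lies in $\text{Aff}^r(F)$; the definition of the initial topology on $L^1_{\loc}(\mathbb{T}; F)$ then forces $f_\gamma = \mathrm{pr}_2 \circ \xi_\Gamma \in L^1_{\loc}(\mathbb{T}; \mathbb{R})$, as required. I expect the only mildly delicate points, both routine, to be verifying that the function/section identification is a topological isomorphism (so that local Bochner integrability transfers between $C^\nu(M)$ and $\Gamma^\nu(E)$) and that the constant-fibre map $\Gamma$ is continuous for the total-space distance used to topologise $C^0(\mathbb{T}; E^* \otimes F)$. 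Neither is a genuine obstacle, so the entire content of the corollary lies in the reduction to trivial bundles and the identity $\Gamma$.
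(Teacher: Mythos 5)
Your proposal is correct and is essentially identical to the paper's own proof, which likewise applies Lemma \ref{lem:2.5} with $E = F = M\times\mathbb{R}$ and $\Gamma(t) = ((\gamma(t),\gamma(t)),\mathrm{id}_{\mathbb{R}})$. The additional details you supply --- continuity of the constant-fibre $\Gamma$ and the passage from $L^1_{\mathrm{loc}}(\mathbb{T};F)$ to $L^1_{\mathrm{loc}}(\mathbb{T};\mathbb{R})$ via the fibre projection --- are routine verifications the paper leaves implicit.
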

\begin{proof}
Apply the lemma with $E = F = M\times \mathbb{R}$ (so that sections are identified with functions) and $\Gamma(t) = ((\gamma(t), \gamma(t)),\text{id}_\mathbb{R})$.
\end{proof}
We also have the mapping
\begin{eqnarray*}
  \Psi_{\mathbb{T},E,\xi}:C^0(\mathbb{T}; E^* \otimes F) &\to& L^1_{\loc}(\mathbb{T}; F)\\
  \Gamma&\mapsto& \xi_\Gamma,
\end{eqnarray*}
which is well-defined by Lemma \ref{lem:2.5}. The following lemma gives the continuity of this mapping.
\begin{lemma}[Continuity of curve to composite section map]\label{lem:2.6}
Let $\pi_E : E \to M$ and $\pi_F : F\to M$ be $C^\infty$-vector bundles, let $\mathbb{T} \subseteq \mathbb{R}$ be an interval, and let $\xi \in \Gamma^0_{\text{LI}}(\mathbb{T}; E)$. Then $\Psi_{\mathbb{T},E,\xi}$ is continuous.
\end{lemma}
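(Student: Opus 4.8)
The plan is to exploit the fact that the target $\text{L}^1_{\loc}(\mathbb{T};F)$ carries the initial topology induced by the maps $\alpha_G:\Lambda\mapsto (G\circ\Lambda)|\mathbb{S}$ into $\text{L}^1(\mathbb{S};\mathbb{F})$, ranging over $G\in\text{Aff}^r(F)$ and compact intervals $\mathbb{S}\subseteq\mathbb{T}$. By the universal property of the initial topology, $\Psi_{\mathbb{T},E,\xi}$ is continuous if and only if each composite $\alpha_G\circ\Psi_{\mathbb{T},E,\xi}$, namely $\Gamma\mapsto \big(t\mapsto G(\Gamma(t)(\xi(t,\gamma_M(t))))\big)|\mathbb{S}$, is continuous. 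Since $\mathbb{T}\subseteq\mathbb{R}$ is an interval, hence $\sigma$-compact, the topology on $C^0(\mathbb{T};E^*\otimes F)$ is generated by the countable subfamily $\{d_{\mathbb{S}_k,E^*\otimes F}\}$ associated with an exhaustion $\mathbb{S}_1\subseteq\mathbb{S}_2\subseteq\cdots$ of $\mathbb{T}$ by compact intervals, and is therefore metrizable; consequently it suffices to verify \emph{sequential} continuity of each $\alpha_G\circ\Psi_{\mathbb{T},E,\xi}$.

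So fix $G$, $\mathbb{S}$, and a sequence $\Gamma_n\to\Gamma_0$, i.e.\ $d_{\mathbb{S},E^*\otimes F}(\Gamma_n,\Gamma_0)\to 0$. First I would establish pointwise convergence of the integrands: for almost every $t\in\mathbb{S}$, uniform convergence on $\mathbb{S}$ forces the induced base curves $\gamma_{n,M}(t)\to\gamma_{0,M}(t)$ together with convergence of the $\text{Hom}$-components $\Gamma_n(t)\to\Gamma_0(t)$ in the total space; since $x\mapsto\xi(t,x)$ is continuous and the contraction map $\text{Hom}_\mathbb{F}(E_x;F_y)\times E_x\to F_y$ and $G$ are continuous, it follows that $G(\xi_{\Gamma_n}(t))\to G(\xi_{\Gamma_0}(t))$.

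The main work, and the step I expect to be the principal obstacle, is producing a single $\text{L}^1(\mathbb{S})$-dominating function, complicated by the double dependence of $\xi_\Gamma$ on $\Gamma$ --- through the evaluation point $\gamma_M$ and through the linear map $\Gamma(t)$ itself. Here the affine hypothesis on $G$ is essential: writing $G|F_y$ as $w\mapsto G_0(y)+\langle\lambda(y),w\rangle$ with $G_0,\lambda$ of class $C^r$ in the base point $y$ gives a linear growth bound $|G(w)|\le |G_0(y)|+\|\lambda(y)\|\,\|w\|$. For $n$ large, uniform convergence places all $\Gamma_n(\mathbb{S})$ inside a common relatively compact neighbourhood $\mathcal{W}$ of the compact set $\Gamma_0(\mathbb{S})$; projecting, the base curves then lie in fixed compacta $K\subseteq M$ (on the $E$-side) and $K'\subseteq M$ (on the $F$-side), while the operator norm $\|\Gamma_n(t)\|$ is bounded by some $C$ on $\overline{\mathcal{W}}$. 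Invoking the local integrable majorant supplied by $\xi\in\Gamma^0_{\text{LI}}(\mathbb{T};E)$ (exactly as used in the proof of Lemma \ref{lem:2.5}), namely an $h\in\text{L}^1(\mathbb{S})$ with $\sup_{x\in K}\|\xi(t,x)\|\le h(t)$, and combining with $\|\xi_{\Gamma_n}(t)\|\le \|\Gamma_n(t)\|\,\|\xi(t,\gamma_{n,M}(t))\|\le C\,h(t)$, yields $|G(\xi_{\Gamma_n}(t))|\le A + B\,h(t)$ for constants $A,B$ depending only on $K'$ and $C$. This majorant lies in $\text{L}^1(\mathbb{S})$ and is independent of $n$.

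With pointwise convergence and this domination in hand, the Dominated Convergence Theorem gives $\int_{\mathbb{S}}|G(\xi_{\Gamma_n}(t))-G(\xi_{\Gamma_0}(t))|\,d t\to 0$, that is, $\alpha_G(\xi_{\Gamma_n})\to\alpha_G(\xi_{\Gamma_0})$ in $\text{L}^1(\mathbb{S};\mathbb{F})$. As $G\in\text{Aff}^r(F)$ and the compact interval $\mathbb{S}\subseteq\mathbb{T}$ were arbitrary, each defining map of the initial topology is sequentially continuous, and hence $\Psi_{\mathbb{T},E,\xi}$ is continuous.
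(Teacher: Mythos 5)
Your proof is correct and follows essentially the same route as the paper's: reduce via the initial topology to the maps $\alpha_G$, pass to a convergent sequence, trap all $\Gamma_n(\mathbb{S})$ in a common compact set so as to obtain a single integrable majorant, and conclude with the Dominated Convergence Theorem. You are in fact slightly more careful than the paper in two spots: you justify why sequential continuity suffices (metrizability coming from $\sigma$-compactness of $\mathbb{T}$), and your pointwise-convergence step correctly lets both the evaluation point $\gamma_{n,M}(t)$ and the operator $\Gamma_n(t)$ vary, whereas the paper's displayed limit only varies the former.
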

\begin{proof}
Let $G \in \text{Aff}^\infty(F)$ and let $\mathbb{S} \subseteq \mathbb{T}$ be a compact interval. Let $\Gamma_j \in C^0(\mathbb{T}; E^* \otimes F)$,
$j \in \mathbb{Z}_{>0}$, be a sequence of curves converging to $\Gamma \in C^0(\mathbb{T}; E^* \otimes F)$. Since $\Gamma(\mathbb{S})$ is compact
and $E^* \otimes F$ is locally compact, we can find a precompact neighbourhood $\mathcal{W}$ of $\Gamma(\mathbb{S})$. Then, for $N \in \mathbb{Z}_{>0}$ sufficiently large, we have $\Gamma_j (\mathbb{S}) \subseteq \mathcal{W}$, $j\geq N$ by uniform convergence. Therefore, we can find a compact set $L \subseteq E^* \otimes F$ such that $\Gamma^j (\mathbb{S}) \subseteq L$, $j \in \mathbb{Z}_{>0}$, and $\Gamma(\mathbb{S}) \subseteq L$. Let $g \in L^1(\mathbb{S}; \mathbb{R}_{\geq0})$ be such that
$$|G\circ\Gamma(t)(\xi(t,x))|\leq h(t) \hspace{10pt} (t,x)\in\mathbb{S}\times K,$$
this since $\xi \in \Gamma^0_{\text{LI}}(\mathbb{T}; M)$ and since $\Gamma$ is continuous. Then, for fixed $t \in \mathbb{S}$, continuity of
$x \mapsto G \circ \Gamma(t)(\xi(t, x))$ ensures that
$$\lim_{j\to\infty}G\circ\Gamma(t)(\xi(t,\gamma_{M,j}(t)))= G\circ\Gamma(t)(\xi(t,\gamma_M(t))).$$
We also have
$$|G\circ\Gamma(t)(\xi(t,\gamma_{M,j}(t)))|\leq g(t),\hspace{10pt} t\in\mathbb{S}.$$
Therefore, by the Dominated Convergence Theorem \cite[Theorem 2.4.5]{Cohn(2013)}
$$\lim_{j\to\infty}\int_{\mathbb{S}}G\circ\Gamma(t)(\xi(t,\gamma_{M,j}(t)))~d t=\int_{\mathbb{S}}G\circ\Gamma(t)(\xi(t,\gamma_M(t)))~dt,$$
which gives the desired continuity.
\end{proof}
\section{Continuous dependence of fixed-time flow on parameter}\label{section4}
To answer the question we proposed at the beginning of Section \ref{section3}, that, for $X\in C^0(\mathcal{P};\text{L}^{1}_{\loc}(\mathbb{T};\Gamma^{\nu}(TM)))$, if its flow $\Phi^X\in\text{LocFlow}^\nu(\mathbb{S'};\mathbb{S};\mathcal{U};\mathcal{P})$, we will give the following theorem. 
\begin{thm}\label{thm;3.4}
Let $m\in \mathbb{Z}_{\geq 0}$, let $\nu\in\{m,\infty,\rm{hol}\}$ satisfy $\nu\geq \rm{lip}$, and let $r\in \{\infty,\rm{hol}\}$as appropriate. Let $M$ be a $C^r$-manifold, let $\mathbb{T}\subseteq\mathbb{R}$ be an interval, and let $\mathcal{P}$ be a topological space. For $X\in C^0(\mathcal{P};\rm{L}^{1}_{\loc}(\mathbb{T};\Gamma^{\nu}(TM)))$, let $t_0,t_1\in\mathbb{T}$ and $p_0\in\mathcal{P}$ be such that there exists a precompact open set $\mathcal{U}\subseteq M$ such that $\rm{cl}(\mathcal{U})\subseteq \rm{D}_X(t_1,t_0,p_0)$. Then there exists a neighbourhood $\mathcal{O}\subseteq\mathcal{P}$ of $p_0$ such that mapping
\begin{eqnarray*}
  \mathcal{O}\ni p \mapsto \Phi^{X^p}_{t_1,t_0}\in C^\nu(\mathcal{U};M)
\end{eqnarray*}
is well-defined and continuous.
\end{thm}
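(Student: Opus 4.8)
The plan is to first secure well-definedness and a uniform compact ``arena'' for all nearby parameters, then reduce the $C^\nu$-continuity to uniform convergence of the flow together with its spatial derivatives, and finally run a Gronwall-type induction on the order of differentiation. First I would apply Lemma~\ref{cor:3.11} with the compact set $K=\mathrm{cl}(\mathcal{U})$. The hypothesis $\mathrm{cl}(\mathcal{U})\subseteq D_X(t_1,t_0,p_0)$ gives $[t_0,t_1]\times\{t_0\}\times\mathrm{cl}(\mathcal{U})\times\{p_0\}\subseteq D_X$, so the lemma furnishes a neighbourhood $\mathcal{O}$ of $p_0$ for which $\bigcup_{(t,x,p)\in[t_0,t_1]\times\mathrm{cl}(\mathcal{U})\times\mathcal{O}}\Phi^X(t,t_0,x,p)$ is well-defined and precompact. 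In particular $\Phi^{X^p}_{t_1,t_0}$ is defined on all of $\mathcal{U}$ for every $p\in\mathcal{O}$, and there is a single compact $L\subseteq M$ containing every trajectory $s\mapsto\Phi^X(s,t_0,x,p)$, $(s,x,p)\in[t_0,t_1]\times\mathrm{cl}(\mathcal{U})\times\mathcal{O}$; this common $L$ is what makes every constant below uniform in $p$. Since $C^\nu(\mathcal{U};M)$ carries the initial topology defined by $\Phi\mapsto f\circ\Phi\in C^\nu(\mathcal{U})$, $f\in C^r(M)$, it suffices to show $\mathcal{O}\ni p\mapsto f\circ\Phi^{X^p}_{t_1,t_0}\in C^\nu(\mathcal{U})$ is continuous for each such $f$. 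Because $p^\infty_{K,m}$ ranges over finite orders, the case $\nu=\infty$ reduces to the case $\nu=m$, while $\nu=\hol$ reduces via Cauchy estimates to local uniform convergence, i.e.\ to the order-zero estimate below. (That each $\Phi^{X^p}_{t_1,t_0}$ is itself $C^\nu$ in $x$ is the classical dependence-on-initial-conditions result, recovered from the variational system used below.)

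Next I would set up the equations governing the derivatives. Writing $y^p(s)=\Phi^X(s,t_0,x,p)$, the defining identity reads $f\circ y^p(t_1)=f(x)+\int_{t_0}^{t_1}(X^pf)(s,y^p(s))\,ds$, and differentiating in the initial condition $x$ produces the prolonged (variational) system for the jet of $\Phi^{X^p}_{\cdot,t_0}$: each spatial derivative of order $k$ satisfies a linear integral equation whose right-hand side is polynomial in the spatial derivatives of order $\le k$ and in the spatial derivatives of $X^p$ up to order $k$ evaluated along $y^p$. Each such integrand is precisely a composite section in the sense of Section~\ref{appendixb}, with $\Gamma$ encoding the already-computed lower-order flow jet (a $\mathrm{Hom}$-bundle-valued curve) and $\xi$ the corresponding derivative of $X^p$; Lemma~\ref{lem:2.5} and Corollary~\ref{cor:2.7} then guarantee these right-hand sides are locally Bochner integrable in time, so the variational equations are genuine Carath\'eodory integral equations, and Lemma~\ref{lem:2.6} records their continuous dependence on the flow-jet data. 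Confining everything to $L$ bounds every coefficient and every derivative of $f$ uniformly.

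I would then prove, by induction on $k\in\{0,1,\dots,m\}$, that
\[
\sup_{x\in K}\ \max_{|\alpha|\le k}\ \bigl|\partial^\alpha_x\Phi^{X^p}(t_1,t_0,x)-\partial^\alpha_x\Phi^{X^{p_0}}(t_1,t_0,x)\bigr|\longrightarrow 0
\]
as $p\to p_0$ (in the neighbourhood sense), for compact $K\subseteq\mathcal{U}$. For $k=0$ this is the classical estimate: subtracting the integral equations for $y^p$ and $y^{p_0}$, splitting the integrand into a term measuring $X^p-X^{p_0}$ (controlled by $\int_{t_0}^{t_1}p^\nu_K(X^p_s-X^{p_0}_s)\,ds$, made small on a neighbourhood of $p_0$ by the membership condition~\eqref{eq:2.4}) and a term measuring the displacement of trajectories (controlled by the local Lipschitz constant of $X^{p_0}$, available since $\nu\ge\lip$), and applying Gronwall. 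The inductive step is identical in spirit: the order-$k$ derivatives solve a linear equation whose homogeneous part has the bounded coefficient $D_xX^{p_0}$ and whose inhomogeneity is small because the lower-order derivatives converge by hypothesis, the trajectories converge uniformly, and $X^p\to X^{p_0}$ in the $k$-th order seminorm (again by~\eqref{eq:2.4}, using $\nu=m\ge k$); Gronwall closes the step. Once all spatial derivatives up to order $m$ converge uniformly on $K$ while remaining in $L$, the chain rule (Fa\`a di Bruno) with the uniform continuity of the derivatives of $f$ on $L$ yields $p^\nu_K(f\circ\Phi^{X^p}_{t_1,t_0}-f\circ\Phi^{X^{p_0}}_{t_1,t_0})\to 0$. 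The same estimates with $p_0$ replaced by an arbitrary $p_\ast\in\mathcal{O}$ (legitimate since Lemma~\ref{cor:3.11} keeps $\mathrm{cl}(\mathcal{U})$ inside the flow domain throughout $\mathcal{O}$) give continuity at every point of $\mathcal{O}$.

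The main obstacle is the inductive Gronwall step for the higher-order derivatives: one must track the combinatorial structure of the prolonged variational system, verify that every coefficient and source term is controlled uniformly over $L$ and over $p\in\mathcal{O}$, and --- crucially, since $\mathcal{P}$ is an arbitrary topological space with no countable neighbourhood basis --- phrase each convergence as a neighbourhood estimate driven by~\eqref{eq:2.4} rather than through sequences and the Dominated Convergence Theorem. The holomorphic case sidesteps this bookkeeping entirely, since there the $C^\hol$-seminorms are sup-norms and Cauchy estimates upgrade the order-zero convergence to convergence of all derivatives automatically.
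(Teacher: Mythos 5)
Your proposal is correct in its overall architecture and matches the paper at several key points: both begin by applying Lemma~\ref{cor:3.11} with $K=\mathrm{cl}(\mathcal{U})$ to get well-definedness and a common precompact arena, both reduce $C^\nu$-continuity via the initial topology to estimates on $f\circ\Phi^{X^p}_{t_1,t_0}$ for test functions $f\in C^r(M)$, both handle $\nu=\infty$ by reduction to finite $m$ and $\nu=\hol$ by reduction to the sup-norm ($C^0$-type) estimate, and both split the resulting integral difference into a term measuring $X^p-X^{p_0}$ (controlled by the membership condition~\eqref{eq:2.4}) and a term measuring the displacement of trajectories. Where you genuinely diverge is in how the displacement term is controlled. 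The paper does not run a Gr\"onwall induction on the order of differentiation: it works directly with the $m$-jet integral identity, views $j_m\Phi^{X^p}_{t,t_0}(x)$ as a curve in a $\mathrm{Hom}$-bundle, and invokes the continuity of the composite-section map (Lemma~\ref{lem:2.6}) composed with the asserted joint continuity of $(x,p)\mapsto\bigl(t\mapsto j_m\Phi^{X^p}_{t,t_0}(x)\bigr)$, then converts the resulting $L^1$-convergence into fibre-norm estimates using local affine coordinates on $J^m(M;\mathbb{R})$. Your route instead derives the prolonged variational system and closes each order by a Gr\"onwall estimate. The trade-off: your approach is more self-contained --- it would actually \emph{prove} the joint $(x,p)$-continuity of the flow jet that the paper takes as given, and it needs no machinery beyond the classical Carath\'eodory theory plus~\eqref{eq:2.4} --- but it carries the full combinatorial burden of the higher-order variational equations (Fa\`a di Bruno, uniformity of the Gr\"onwall coefficients $\int\lVert D_x^{j}X^p_s\rVert\,ds$ over $p\in\mathcal{O}$, and chart-level bookkeeping on the manifold), all of which you correctly flag but do not execute. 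The paper's jet-level formulation buys exactly the avoidance of that bookkeeping, at the cost of leaning on Lemmas~\ref{lem:2.5}--\ref{lem:2.6} and an unproved continuity assertion. One point you handle more carefully than the paper: you note explicitly that the estimates must be rerun at an arbitrary $p_\ast\in\mathcal{O}$ to get continuity on all of $\mathcal{O}$ rather than only at $p_0$, and that all convergences must be phrased as neighbourhood estimates since $\mathcal{P}$ need not be first countable.
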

\begin{proof} 
We break down the proof into the various classes of regularity. The proofs bear a strong resemblance to one another, so we go through the details carefully in the first case we prove, the locally Lipschitz case, and then merely outline where the arguments differ for the other regularity classes.

\textbf{The $\mathbf{C^0}$-case.} Note that $\rm{cl}(\mathcal{U})$ is compact. Therefore, by Lemma \ref{cor:3.11}, there exists a compact set $K' \subseteq M$ and a neighbourhood $\mathcal{O}$ of $p_0$ such that
$$\Phi^{X^p}_{t_1,t_0}(x)\in \text{int}(K'),\hspace{10pt}(x,p)\in\text{cl}(\mathcal{U})\times \mathcal{O}.$$
This gives the well-definedness assertion of the theorem. Note, also, that it gives the well-definedness assertion for all $\nu \in \{m, \infty, \omega, \text{hol}\}$, and so we need not revisit this for the remainder of the proof. The compact set $K' \subseteq M$ and the neighbourhood $\mathcal{O}$ of $p_0$ will be used in all parts of the proof without necessarily referring to our constructions here.

For continuity, first we show that the mapping
\begin{eqnarray*}
  \mathcal{O}\ni p \mapsto \Phi^{X^p}_{t_1,t_0}\in C^0(\mathcal{U};M)
\end{eqnarray*}
is continuous. The topology for $C^0(\mathcal{U};M)$ is the uniform topology defined by the semimetrics 
$$d^0_{K,f}(\Phi_1,\Phi_2)=\sup\{|f\circ\Phi_1(x)-f\circ\Phi_2(x)|\ | \ x\in K\}$$
where  $f\in C^\infty(M)$ and $K\subseteq \mathcal{U}$ compact.
Thus, we must show that, for $f_1,...,f_m\in C^\infty(M)$, for $K_1,...,K_m\subseteq\mathcal{U}$ compact, and for $\epsilon_1,...,\epsilon_m\in\mathbb{R}_{>0}$, there exists a neighborhood $\mathcal{O}$ of $p_0$ such that 
$$|f_j\circ\Phi^{X^p}_{t_1,t_0}(x_j)-f_j\circ \Phi^{X^{p_0}}_{t_1,t_0}(x_j)|<\epsilon_j,\hspace{10pt} x_j\in K_j, \ p\in\mathcal{O}, \ j\in\{1,...,m\}.$$
It will suffice to show that, for $f\in C^\infty(M)$, for $K\subseteq\mathcal{U}$ compact, and for $\epsilon\in\mathbb{R}_{>0}$, we have  
$$|f\circ\Phi^{X^p}_{t_1,t_0}(x)-f\circ \Phi^{X^{p_0}}_{t_1,t_0}(x)|<\epsilon,\hspace{10pt} x\in K, \ p\in\mathcal{O}.$$
Indeed, if we show that then, taking $K=\cup_{j=1}^{k}K_j$ and $\epsilon
=\min\{\epsilon_1,...,\epsilon_m\}$, we have 
$$|f_j\circ\Phi^{X^p}_{t_1,t_0}(x)-f_j\circ \Phi^{X^{p_0}}_{t_1,t_0}(x)|<\epsilon,\hspace{10pt} x\in K, \ p\in\mathcal{O}, \ j\in\{1,...,m\}$$
for a suitable $\mathcal{O}$. This suffices to give the desired conclusion.

It is useful to consider the space $C^0(\mathbb{T};M)$ with the topology (indeed, uniformity) defined by the family of semimetrics
$$d_{\mathbb{S},M}(\gamma_1,\gamma_2)=\{d_{\mathbb{G}}(\gamma_1(t),\gamma_2(t))\ |\ t\in \mathbb{S}\},\hspace{10pt}\mathbb{S}\subseteq\mathbb{T} \text{ a compact interval.}$$
For $g\in C^0_{\text{LI}}([t_0,t_1];M)$, we also have the mapping
\begin{eqnarray*}
  \Psi_{[t_0,t_1],M,g}:C^0([t_0,t_1];M) &\rightarrow & L^1_{\rm{loc}}([t_0,t_1];\mathbb{R})\\
       \gamma&\mapsto& (t\mapsto g_t(\gamma(t))).
\end{eqnarray*}
By Lemma \ref{cor:2.7} and Lemma \ref{lem:2.6},  this map is well-defined and continuous.
\renewcommand\qedsymbol{$\nabla$}
We also have the continuous mapping
\begin{eqnarray*}
  \Phi_{[t_0,t_1], K,\mathcal{O}}:K\times\mathcal{O} &\rightarrow & C^0([t_0,t_1];M)\\
       (x,p)&\mapsto& (t\mapsto\Phi^X(t,t_0,x,p)).
\end{eqnarray*}
Let $f\in C^\infty(M)$, let $\epsilon>0$, and let $x\in K$. Combining the observations of two previous paragraphs, the mapping
$$\Psi_{[t_0,t_1],M,X^{p_0}f}\circ \Phi_{\mathbb{T}, K,\mathcal{O}}: K\times \mathcal{O}\rightarrow L^1([t_0,t_1];\mathbb{R})$$
is continuous. Thus there exists a relative neighbourhood $\mathcal{V}_x\subseteq K$ of $x$ and a neighbourhood $\mathcal{O}_x\subseteq\mathcal{O}$ of $p_0$ such that
$$\int_{t_0}^{t_1}|X^{p_0}f(s,\Phi^{X^p}_{s,t_0}(x'))-X^{p_0}f(s,\Phi^{X^{p_0}}_{s,t_0}(x'))|~ds<\frac{\epsilon}{2} \hspace{15pt} x'\in\mathcal{V}_x, \ p\in\mathcal{O}_x.$$
Let $x_1,...,x_m\in K$ be such that $K= \cup_{j=1}^{m}\mathcal{V}_{x_j}$ and define a neighbourhood $\mathcal{O}_1=\cap_{j=1}^{k}\mathcal{O}_{x_j}$ of $p_0$. Then we have
\begin{equation}\label{eq:5.19}
    \int_{t_0}^{t_1}|X^{p_0}f(s,\Phi^{X^p}_{s,t_0}(x))-X^{p_0}f(s,\Phi^{X^{p_0}}_{s,t_0}(x))|\ ds<\frac{\epsilon}{2},\hspace{15pt}x\in K,\ p\in\mathcal{O}_1 .
\end{equation}
By (\ref{eq:2.4}), we can further shrink $\mathcal{O}_1$ if necessary so that
$$\int_{t_0}^{t_1}|X^{p}f(s,x)-X^{p_0}f(s,x)|\ ds<\frac{\epsilon}{2} \hspace{15pt} x'\in K, \ p\in\mathcal{O}_1.$$
Then we have 
\begin{eqnarray*}
   &&|f\circ\Phi^{X^p}_{t_1,t_0}(x)-f\circ \Phi^{X^{p_0}}_{t_1,t_0}(x)|\\
   &&\hspace{2cm}\leq\int_{t_0}^{t_1}|X^{p}f(s,\Phi^{X^p}_{s,t_0}(x))-X^{p_0}f(s,\Phi^{X^{p_0}}_{s,t_0}(x))|\ ds\\
   &&\hspace{2cm}\leq \int_{t_0}^{t_1}|X^{p}f(s,\Phi^{X^p}_{s,t_0}(x))-X^{p_0}f(s,\Phi^{X^{p}}_{s,t_0}(x))|\ ds\\
   &&\hspace{2.4cm}+\int_{t_0}^{t_1}|X^{p_0}f(s,\Phi^{X^p}_{s,t_0}(x))-X^{p_0}f(s,\Phi^{X^{p_0}}_{s,t_0}(x))|\ ds\\
   &&\hspace{2cm}\leq \frac{\epsilon}{2}+\frac{\epsilon}{2}=\epsilon,
\end{eqnarray*}
for $x\in K$ and $p\in\mathcal{O}_1$, as desired. 

Therefore, for every compact $K\subseteq\mathcal{U}$, every $f\in C^\infty(M)$, and every $\epsilon\in\mathbb{R}_{>0}$, if $p\in\mathcal{O}_1\cap\mathcal{O}_2$, then we ave 
$$p^0_K(f\circ\Phi^{X^p}_{t_1,t_0}-f\circ \Phi^{X^{p_0}}_{t_1,t_0})<\epsilon,$$
which gives the desired result.

\textbf{The $\mathbf{C^m}$-case.} The topology for $C^m(\mathcal{U};M)$ is the uniform topology defined by the semimetrics
\begin{eqnarray*}
   d^m_{K,f}(\Phi_1,\Phi_2)=\sup\{\|j_m(f\circ\Phi_1)(x)-j_m(f\circ\Phi_2)(x)\|_{\mathbb{G}_{M,\pi,m}}\ | \ x\in K\},\\
   f\in C^\infty(M), \ K\subseteq \mathcal{U}\ \text{compact}.
\end{eqnarray*}
As in the preceding section when we proved $C^0$ continuity, it suffices to show that, for
$f\in C^\infty(M)$, $K\subseteq\mathcal{U}$ compact, and for $\epsilon\in\mathbb{R}_{>0}$, there exists a neighbourhood $\mathcal{O'}$ of $p_0$ such that 
$$\|j_m(f\circ\Phi^{X^p}_{t_1,t_0})(x)-j_m(f\circ\Phi^{X^{p_0}}_{t_1,t_0})(x)\|_{\mathbb{G}_{M,\pi,m}}<\epsilon,\hspace{10pt}x\in K,\ p\in\mathcal{O'}.$$

Thus let $f\in C^\infty(M)$, $K\subseteq\mathcal{U}$ compact, and let $\epsilon\in\mathbb{R}_{>0}$. Consider the mapping
\begin{eqnarray*}
  \Phi_{[t_0,t_1], K,\mathcal{O}}:K\times\mathcal{O} &\rightarrow & C^0([t_0,t_1];J^m(\mathcal{U};M))\\
       (x,p)&\mapsto& (t\mapsto j_m\Phi^{X^p}_{t,t_0}(x)),
\end{eqnarray*}
which is well-defined and continuous. For $(x,p)\in K\times\mathcal{O}$ and for $t\in[t_0,t_1]$, we can think of $j_m\Phi^{X^p}_{t,t_0}(x))$ as a linear mapping 
\begin{eqnarray*}
  j_m\Phi^{X^p}_{t,t_0}(x):J^m(M;\mathbb{R})_{\Phi^{X^p}_{t,t_0}(x)} &\rightarrow & J^m(M;\mathbb{R})_x \\
       j_m g(\Phi^{X^p}_{t,t_0}(x))&\mapsto& j_m (g\circ\Phi^{X^p}_{t,t_0})(x).
\end{eqnarray*}
Now, fixing $(x,p)\in\mathcal{U}\times \mathcal{O}$ for the moment, recall the constructions in Lemma \ref{lem:2.5}, we consider the notation from those constructions with
\begin{enumerate}
    \item $N=M$,
    
    \item $E=F=J^m(M;\mathbb{R})$,
    
    \item $\Gamma(t)=j_m\Phi^{X^p}_{t,t_0}(x)\in \text{Hom}_{\mathbb{R}}(J^m(M,\mathbb{R})_{\Phi^{X^p}_{t,t_0}(x)};J^m(M;\mathbb{R})_x)$, and
    
    \item $\xi=j_m(X^{p_0}f)$.
\end{enumerate}
Thus, we have
$$\gamma_M(t)=\Phi^{X^p}_{t,t_0}(x),\hspace{10pt}\gamma_N(t)=x.$$
We then have the integrable section of $E=J^m(M;\mathbb{R})$ given by 
\begin{eqnarray*}
  \xi_\Gamma:[t_0,t_1] &\rightarrow & E\\
       t&\mapsto& (t\mapsto j_m (X^p_tf\circ\Phi^{X^p}_{t,t_0})(x))
\end{eqnarray*}
to obtain continuity of the mapping
\begin{eqnarray*}
  \Psi_{[t_0,t_1],J^m(M;\mathbb{R}),j_m (X^{p_0}f)}:&C^0([t_0,t_1];J^m(\mathcal{U};M)) \rightarrow  L^1_{\text{loc}}([t_0,t_1];J^m(M;\mathbb{R}))\\
       &\Gamma\mapsto (t\mapsto \Gamma(t)(j_m(X^{p_0}_tf)(\gamma_M(t)))),
\end{eqnarray*}
and so of the composition
$$\Psi_{[t_0,t_1],J^m(M;\mathbb{R}),j_m (X^{p_0}f)}\circ \Phi_{[t_0,t_1], K,\mathcal{O}}: K\times\mathcal{O}\rightarrow L^1_{\text{loc}}([t_0,t_1];J^m(M;\mathbb{R})).$$
Note that this is precisely the continuity of the mapping
$$K\times\mathcal{O}\ni (x,p)\mapsto (t\mapsto j_m(X^{p_0}f\circ \Phi^{X^p}_{t,t_0}(x)))\in L^1_{\text{loc}}([t_0,t_1];J^m(M;\mathbb{R})).$$
In order to convert this continuity into a continuity statement involving the fibre norm for $J^m(M;\mathbb{R})$, we note that for $x\in K$,  there exists a neighbourhood $\mathcal{V}_x$ and affine functions $F^1_x,..., F^{n+k}_x\in\text{Aff}^\infty(J^m(M;\mathbb{R}))$ which are coordinates for $\rho^{-1}_m(\mathcal{V}_x)$. We can choose a Riemannian metric for $J^m(M;\mathbb{R})$, whose restriction to fibres agrees with the fibre metric (\ref{eq:fibremetric}). Hence there exists $C_x\in\mathbb{R}_{>0}$ such that
$$\|j_m g_1(x')-j_m g_2(x')\|_{\mathbb{G}_{M,\pi,m}}\leq C_x|F^l_x\circ j_m g_1(x')-F^l_x\circ j_m g_2(x')|,$$
for $g_1,g_2\in C^\infty(M)$, $x'\in\mathcal{V}_x$, $l\in\{1,...,n+k\}$. By the continuity proved in the preceding paragraph, we can take a relative neighbourhood $\mathcal{V}_x\subseteq K$ of $x$ sufficiently small and a neighbourhood $\mathcal{O}_x\subseteq\mathcal{O}$ of $p_0$ such that 
$$\int_{t_0}^{t_1}|F^l_x\circ j_m(X^{p_0}f\circ \Phi^{X^p}_{t,t_0}(x'))-F^l_x\circ j_m(X^{p_0}f\circ \Phi^{X^{p_0}}_{t,t_0}(x'))|dt<\frac{\epsilon}{2C_x},$$
for all $x'\in\mathcal{V}_x$, $p\in\mathcal{O}_x$, and $l\in\{1,...,n+k\}$, by the definition of the topology for $L^1(|t_0, t_1|; J^m(M; \mathbb{R})$ in \ref{appendixb}. Therefore,
$$\int_{t_0}^{t_1}\|j_m(X^{p_0}f\circ \Phi^{X^p}_{t,t_0}(x'))-j_m(X^{p_0}f\circ \Phi^{X^{p_0}}_{t,t_0}(x'))\|_{\mathbb{G}_{M,\pi,m}} ds <\frac{\epsilon}{2}$$
for all $x'\in\mathcal{V}_x$, $p\in\mathcal{O}_x$. Now let $x_1,...,x_s\in K$ be such that $K= \cup_{r=1}^{s}\mathcal{V}_{x_r}$ and define a neighbourhood $\mathcal{O'}=\cap_{r=1}^{s}\mathcal{O}_{x_r}$ of $p_0$. Then we have 
\begin{equation}\label{eq:5.21}
    \int_{t_0}^{t_1}\|j_m(X^{p_0}_sf\circ \Phi^{X^p}_{s,t_0})(x')-j_m(X^{p_0}_sf\circ \Phi^{X^{p_0}}_{s,t_0})(x')\|_{\mathbb{G}_{M,\pi,m}} ds <\frac{\epsilon}{2}
\end{equation}
for all $x'\in K$, $p\in\mathcal{O'}$. By (\ref{eq:2.4}), we can further shrink $\mathcal{O'}$ if necessary so that 
$$\int_{t_0}^{t_1}\|j_m(X^{p}f)(s,y)-j_m(X^{p_0}f)(s,y)\|_{\mathbb{G}_{M,\pi,m}}d s<\frac{\epsilon}{2} \hspace{15pt} y'\in K', \ p\in\mathcal{O'}.$$
Then we have 
\begin{eqnarray*}
   &&\|j_m (f\circ\Phi^{X^p}_{t_1,t_0})(x)-j_m(f\circ \Phi^{X^{p_0}}_{t_1,t_0})(x)\|_{\mathbb{G}_{M,\pi,m}}\\
   &&\hspace{40pt}\leq\int_{t_0}^{t_1}\|j_m(X^{p}f\circ \Phi^{X^p}_{t,t_0})(x)-j_m(X^{p_0}f\circ \Phi^{X^{p_0}}_{t,t_0})(x)\|_{\mathbb{G}_{M,\pi,m}} ds\\
   && \hspace{40pt}\leq\int_{t_0}^{t_1}\|j_m(X^{p}f\circ \Phi^{X^p}_{t,t_0})(x)-j_m(X^{p_0}f\circ \Phi^{X^{p}}_{t,t_0})(x)\|_{\mathbb{G}_{M,\pi,m}} ds\\
   &&\hspace{50pt}+\int_{t_0}^{t_1}\|j_m(X^{p_0}f\circ \Phi^{X^p}_{t,t_0})(x)-j_m(X^{p_0}f\circ \Phi^{X^{p_0}}_{t,t_0})(x)\|_{\mathbb{G}_{M,\pi,m}} ds\\
   &&\hspace{40pt}\leq \frac{\epsilon}{2}+\frac{\epsilon}{2}=\epsilon,
\end{eqnarray*}
for $x\in K$ and $p\in\mathcal{O'}$, as desired. 

\textbf{The $\mathbf{C^\infty}$-case.} 
From the result in the $C^m$-case for $m\in\mathbb{Z}_{\geq 0}$, the mapping
$$\mathcal{O}\ni p\mapsto \Phi^{X^p}_{t_1,t_0}\in C^m(\mathcal{U};M)$$
is continuous for each $m\in\mathbb{Z}_{\geq 0}$. From the diagram 
\begin{center}
    \begin{tikzcd}[column sep=large]
    C^{\infty}(\mathcal{U};M)
    \arrow[r,"\hookrightarrow"] &C^{m}(\mathcal{U};M) 
     \\
    \mathcal{O} \arrow[u, "p\mapsto\Phi^{X^p}_{t_1,t_0}"]
    \arrow[ur, "p\mapsto j_m\Phi^{X^p}_{t_1,t_0}"', sloped]  
    \end{tikzcd}
\end{center}
and noting that the diagonal mappings in the diagram are continuous, we obtain the continuity of the vertical mapping as a result of the fact that the $C^\infty$-topology is the initial topology induced by the $C^m$-topologies, $m\in\mathbb{Z}_{\geq 0}$.

\textbf{The $\mathbf{C^\textbf{hol}}$-case.} Since the $C^{\text{hol}}$-topology is the restriction of the $C^0$-topology, with the scalars extended to be complex and the functions restricted to be holomorphic, the analysis in $C^0$-case can be carried out verbatim to give the theorem in the holomorphic case.
\end{proof}
\begin{rmk}
    We make an important remark that Theorem \ref{thm;3.4} is valid for all regularities except for $\nu=\lip$ and $\nu=\omega$. It will be our future work to prove or disprove if this theorem is true for either of these cases.
\end{rmk}

\bibliography{main}
\bibliographystyle{plain}
\end{document}